\numberwithin{equation}{section}
\newtheorem{teo}{Theorem}[section]
\newtheorem{pro}[teo]{Proposition}
\theoremstyle{definition}
\newtheorem{defi}[teo]{Definition}
\newtheorem{lem}[teo]{Lemma}
\newtheorem{ejem}[teo]{Example}
\newtheorem{rem}[teo]{Remark}
\newtheorem{coro}[teo]{Corollary}
\newtheorem{exe}[teo]{Example}
\newcommand{\m}{{}^{-1}}
\newcommand{\Hh}{\mathcal{H}}
\title{\textbf{Partial actions  of groups on hyperspaces}}
\author{
	Luis Mart\'inez, H\'{e}ctor Pinedo and  Edwar Ramirez\\
	\small  Escuela de Matem\'{a}ticas\\
	\small Universidad Industrial de Santander\\
	\small Cra. 27 calle 9, Bucaramanga, Colombia\\
	\small  e-mail: luchomartinez9816@hotmail.com, hpinedot@uis.edu.co, edwar5119@gmail.com\\
}
\date{\today}
\begin{document}

	\maketitle
	\begin{abstract} Let $X$ be a compact Hausdorff space. In this work  we translate partial actions of $X$ to partial actions on some hyperspaces determined by $X,$ this gives an endofunctor  $2^{-}$  in the category of partial actions on compact Hausdorff spaces which generates a monad in this category. Moreover, structural relations between partial actions $\theta$ on $X$ and partial determined by $2^{\theta}$ are established.
		
	\end{abstract}
	\noindent
	\textbf{2020 AMS Subject Classification:} Primary 54H15 . Secondary 54B20, 54F16  .\\
	\noindent
	\textbf{Key Words:}  Partial action, globalization, hyperspace, monad.

	\section{Introduction}  Given   an action $\mu: G\times Y\to Y$ of a group $G$ on a set   $Y$ and an invariant subset $X$ of $Y$ (i.e., $\mu(g, x) \in X,$ for all $x \in X$, and  $g\in G$), the restriction of $\mu$ to $G \times X$ determines an action of $G$ on $X.$ However, if $X$ is not invariant, we obtain a partial action on $X$. This is a collection of  partially defined maps  $\theta_g\ (g\in G)$ on $X$ satisfying $\theta_1 = {\rm id}_X$ and $\theta_{gh}$ is an extension of the composition  $\theta_g\circ \theta_h,$ for all $g,h\in G.$  The notion of partial action of a group was introduced by R. Exel in \cite{Ruy1, R} motivated by problems arising from $C^*$-algebras. Since then partial group actions have appeared in many different context, such as the theory of operator algebras, algebra, the theory of R-trees, tilings and model theory (see for instance \cite{KL}).
In the topology, partial actions on topological spaces consist of a family of homeomorphism between open subsets of the space, and have been considered in the context of Polish spaces (see \cite{PU,PU2}), $2$-cell complexes (see \cite{St}), topological semigroups \cite{Choi} and recently in \cite{MPV} where introduced in the realm of profinite spaces.  

It seems that when a partial action on some structure is given, one of the most relevant
problems is the question of the existence and uniqueness of a globalization, that is, if whether a partial action can be realized as restrictions of a corresponding collection of total maps on some superspace.  In the topological context,  this problem was studied by Abadie \cite{AB}  and independently by Kellendonk and Lawson \cite{KL}. It was proved that for any continuous partial action $\theta$ of a topological group $G$ on a topological space $X,$ there is a topological space $Y$ and a continuous action $\mu$ of $G$ on $Y$ such that $X$ is a subspace of $Y$ and $\theta$ is the restriction of $\mu$ to $X.$ Such a space $Y$ is called a globalization of $X.$ They also show that there is a minimal globalization $X_G$ called the enveloping space of $X$ (see subsection \ref{globa} for details). Recent  topological advances on partial actions on (locally) compact spaces  include the groupoid approach to the enveloping spaces associated to partial actions of countable discrete groups \cite{EGG}. Also several classes of $C^*$-algebras can be described as partial crossed products that correspond to partial actions of discrete groups on profinite spaces; for instance the Carlsen-Matsumoto $C^*$ -algebra $\mathcal{O}_X$ of an arbitrary subshift $X$ (see \cite{DOE}). The interested reader may consult \cite{D} and  \cite{Ruy} for a detailed account in  developments around partial actions.

On the other hand, the study of hyperspaces has developed for more than one hundred years, topological properties in hyperspaces: dimension, shape, contractibility, admissibility, unicoherence, etc., have been topics where researchers have dedicated a lot of attention recently. Furthermore, there are many papers in different areas of mathematics focused on the study of set-valued function where hyperspaces are the natural environment to work. Also, it is interesting to study when a hyperspace can be embedded in a cell or when a cell can be embedded in a hyperspace, topics concerning  the $n$-od problem,  Whitney properties and Whitney-reversible properties have been widely considered ( see for instance \cite{NW} and the reference therein).

This work is structured as follows. After the introduction in  Section \ref{pactions} we present the preliminary notions on topological partial actions and their enveloping actions, at the end of this section we fix a compact Hausdorff space $X$ and state our conventions, notations and results  on the  hyperspaces $\Hh_1$, $\Hh_2$ and $\Hh_3$ consisting of compact, compact and connected, and finite subsets of $X$, respectively.  In Section \ref{XtoH} we  translate partial actions  $\theta$   of $X$ to partial actions $2^\theta$ on $\Hh\in \{\Hh_1, \Hh_2, \Hh_3\}$ and present in Theorem \ref{teo2.8} and Proposition \ref{cer} some structural properties preserved by this correspondence.  Separation properties relating enveloping actions of $\theta$ and $2^\theta$ are considered in Corollary \ref{separ2}  and Theorem \ref{regu}. Finally,  Section \ref{cate} has a categorical flavor, where it is considered  the category \textbf{$G\curvearrowright$ CH} whose objects are  topological partial actions on compact Hausdorff spaces and show in Theorem \ref{monad} that the functor $2^-$ generates a monad in this category.

\section{The notions}\label{pactions}
We present the necessary background on partial actions and hyperspaces that we use throughout the work. 	
	\subsection{Preliminaries on partial actions and their enveloping actions} We start with  the following.

\begin{defi} \cite[p. 87-88]{KL} Let $G$ be a group  with identity element $1$ and $X$ be a  set. A partially defined  function $\theta: G\times X\dashrightarrow X$, $(g,x)\mapsto g\cdot x$ is called a (set theoretic)  {\it  partial action} of $G$ on $X$ if for each $g,h\in G$ and $x\in X$ the following assertions hold:
	\begin{enumerate}
		\item [(PA1)] If $\exists g\cdot x$, then $\exists g^{-1}\cdot (g\cdot x)$ and $g^{-1}\cdot (g\cdot x)=x$,
		\item [(PA2)] If $\exists g\cdot(h\cdot x)$, then $\exists (gh)\cdot x$ and $g\cdot(h\cdot x)=(gh)\cdot x$,
		\item [(PA3)] $\exists 1\cdot x$ and $1\cdot x=x,$
	\end{enumerate}
where $\exists g\cdot x$ means that $g\cdot x$ is defined. We say that $\theta$ {\it acts} (globally)  on $X$  or that $\theta$ is global if $\exists g\cdot x,$ for all $(g,x)\in G\times X.$
\end{defi}

Given a partial action  $\theta$  of $G$ on $X,$  $g\in G$ and $x\in X$. We set:
\begin{itemize} 
\item   $G*X=\{(g,x)\in G\times X\mid \exists g\cdot x\}$  the domain of $\theta.$
\item  $X_g=\{x\in X\mid \exists g\m\cdot x \}.$

\end{itemize}	
Then $\theta$ induces a family of bijections $(\theta_g\colon X_{g\m}\ni x\mapsto g\cdot x\in X_g \}_{g\in G},$  
We also denote this family  by $\theta.$
The following result characterizes partial actions in terms of a family of bijections.
\begin{pro}\label{prop2.3} {\rm  \cite[Lemma 1.2]{QR} \label{fam} A partial action $\theta$ of $G$ on $X$ is a family $\theta=\{\theta_g\colon X_{g\m}\to X_g\}_{g\in G},$ where $X_g\subseteq X,$  $\theta_g\colon X_{g\m}\to X_g$ is bijective, for  all $g\in G,$  and such that:
\begin{itemize}
\item[(i)]$X_1=X$ and $\theta_1=\rm{id}_X;$
\item[(ii)]  $\theta_g( X_{g\m}\cap X_h)=X_g\cap X_{gh};$
\item[(iii)] $\theta_g\theta_h\colon X_{h\m}\cap  X_{ h\m g\m}\to X_g\cap X_{gh},$ and $\theta_g\theta_h=\theta_{gh}$ in $ X_{h\m}\cap  X_{ h\m g\m};$
\end{itemize}
for all $g,h\in G.$}
\end{pro}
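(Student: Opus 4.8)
The statement is an equivalence between the axiomatic description of a partial action via (PA1)--(PA3) and the description as a family of bijections satisfying (i)--(iii), so the plan is to prove both implications. Starting from a partial action in the sense of (PA1)--(PA3), I would set $X_g=\{x\in X\mid \exists g\m\cdot x\}$ and $\theta_g\colon X_{g\m}\to X_g$, $x\mapsto g\cdot x$, and first check that each $\theta_g$ is a well-defined bijection with inverse $\theta_{g\m}$. This is immediate from (PA1): whenever $\exists g\cdot x$ one has $g\m\cdot(g\cdot x)=x$, so $\theta_{g\m}\circ\theta_g=\mathrm{id}_{X_{g\m}}$ and symmetrically. Axiom (i) is then just (PA3).

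The main work is axiom (ii), the equality $\theta_g(X_{g\m}\cap X_h)=X_g\cap X_{gh}$, and I expect the key obstacle to be a small auxiliary fact that must be extracted from the axioms: if $\exists h\cdot x$ and $\exists (gh)\cdot x$, then $\exists g\cdot(h\cdot x)$ and the two agree. This follows by putting $z=h\cdot x$, using (PA1) to write $x=h\m\cdot z$, and applying (PA2) to $(gh)\cdot(h\m\cdot z)=(gh)\cdot x$ to produce $g\cdot z$. With this in hand, the forward inclusion of (ii) follows by taking $x$ with $\exists g\cdot x$ and $\exists h\m\cdot x$ and checking $g\cdot x\in X_{gh}$ via $h\m g\m\cdot(g\cdot x)=h\m\cdot x$ (again PA1 to collapse $g\m\cdot(g\cdot x)=x$, then PA2 to recombine); the reverse inclusion is the same computation read backwards, applying the auxiliary fact to $y\in X_g\cap X_{gh}$ and using that $\theta_{g\m}$ inverts $\theta_g$. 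Axiom (iii) is then cheap: the domain of $\theta_g\theta_h$ is $\theta_{h\m}(X_h\cap X_{g\m})$, which by (ii) applied to $h\m$ and $g\m$ equals $X_{h\m}\cap X_{h\m g\m}$, and the equality $\theta_g\theta_h=\theta_{gh}$ on this set is exactly the content of (PA2).

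Conversely, given a family satisfying (i)--(iii), I would define a partially defined function by declaring $g\cdot x$ to exist iff $x\in X_{g\m}$, with value $\theta_g(x)$, and verify (PA1)--(PA3). Here (PA3) is (i); for (PA1) one uses the instance $h=g\m$ of (iii) together with (i) to obtain $\theta_{g\m}\theta_g=\mathrm{id}$, whence $g\m\cdot(g\cdot x)=x$; and (PA2) is a direct translation of (iii), since the hypothesis $\exists g\cdot(h\cdot x)$ forces $x$ into the domain $X_{h\m}\cap X_{h\m g\m}$ on which (iii) guarantees $\theta_g\theta_h=\theta_{gh}$. The only delicate bookkeeping throughout is keeping the domains straight and invoking the correct instance of (ii)/(iii) after relabeling the group elements; no deeper ingredient is required.
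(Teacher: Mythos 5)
Your proof is correct; note, however, that the paper offers no proof of this proposition to compare against --- it is quoted directly from \cite[Lemma 1.2]{QR} --- so the only benchmark is the original Quigg--Raeburn argument, and your route is exactly that standard one. The genuinely non-trivial point, namely the inclusion $X_g\cap X_{gh}\subseteq \theta_g(X_{g\m}\cap X_h)$ in (ii), is where a naive use of the axioms fails: (PA2) only produces $(gh)\cdot x$ from $g\cdot(h\cdot x)$, never the converse, and your auxiliary fact (if $\exists h\cdot x$ and $\exists (gh)\cdot x$, then $\exists g\cdot(h\cdot x)$ and they agree, proved by setting $z=h\cdot x$, writing $x=h\m\cdot z$ via (PA1), and applying (PA2) to $(gh)\cdot(h\m\cdot z)$) is precisely the missing converse; identifying and proving this is the heart of the lemma, and you have it right. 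One bookkeeping remark on the converse direction: when you say the hypothesis $\exists g\cdot(h\cdot x)$ ``forces $x$ into $X_{h\m}\cap X_{h\m g\m}$,'' this is not immediate from (iii) read merely as an agreement of maps --- it requires (ii) applied to the pair $(h\m,g\m)$ together with $\theta_h^{-1}=\theta_{h\m}$, the latter following from the instance $\theta_{h\m}\theta_h=\theta_1$ of (iii) and (i). You do in effect carry out this computation when identifying the domain of $\theta_g\theta_h$ in the forward direction, so the argument is complete, but it is worth making the dependence on (ii) explicit at that step.
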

 In view of Proposition \ref{prop2.3} a partial action on $X$ are frequently denoted as a family of maps  $(\theta_g, X_g)_{g\in G},$ between subsets of $X$ satisfying conditions (i)-(iii) above.

From now on in this work $G$ will denote a topological group and $X$ a topological space. We endow $G\times X$ with the product topology and $G*X$ with the topology of subspace. Moreover $\theta: G*X\to X$  will denote a partial action.  We say that $\theta$ is a {\it topological partial action} if $X_g$ is open and $\theta_g$ is a homeomorphism, for all $g\in G$. Moreover, if $\theta$ is continuous, $\theta$ 
is  called a {\it continuous partial action}.


	\subsection{Restriction of global  actions and globalization}\label{globa}	

Let $\mu \colon G\times Y\to Y$ be a continuous  action of $G$ on  a  topological space $Y$ and $X\subseteq Y$ be an open set.  Then we can obtain by restriction a  topological partial action on $X$ by setting:
\begin{equation*}\label{induced}X_g=X\cap \mu_g(X),\,\,\, \,\, \theta_g=\mu_g\restriction X_{g\m}\,\,\,  \text{ and }\, \,\,\,\theta \colon G* X\ni (g,x)\mapsto \theta_g(x)\in X .\end{equation*}  Then $\theta$ is a topological partial action of $G$ on $X,$ we say that   $\theta$   is  the {\it restriction of $\mu$ to X}. 


As mentioned in the introduction, a  natural problem in the study of partial actions is whether they can be restriction of global actions. In the topological sense, this turns out to be affirmative and a proof was given in \cite[Theorem 1.1]{AB} and independently in \cite[Section 3.1]{KL}.  Their construction is as follows. Let $\theta$ be a topological partial action of $G$ on $X$ and consider the following equivalence relation on $ G\times X$:
\begin{equation}
\label{eqgl}
(g,x)R(h,y) \Longleftrightarrow x\in X_{g\m h}\,\,\,\, \text{and}\, \,\,\, \theta_{h\m g}(x)=y.
\end{equation}
 Denote by  $[g,x]$   the equivalence class of  $(g,x).$
The {\it enveloping space}  or the {\it globalization}  of $X$ is the set  $X_G=(G\times X)/R$  endowed the quotient topology.  We have by  \cite[Theorem 1.1]{AB}  that the action 
\begin{equation}
\label{action}
\mu \colon G\times X_G\ni (g,[h,x])\to [gh,x]\in X_G,
\end{equation}
is continuous and is the so called the {\it enveloping action} of $\theta.$  Further by (ii) in  \cite[Proposition 3.9]{KL}  the map 
\begin{equation} \label{op}q: G\times X\ni (g,x)\mapsto [g,x]\in X_G,
\end{equation}
is open. Moreover
\begin{equation}
\label{iota}
\iota \colon X\ni x\mapsto [1,x]\in X_G
\end{equation}  satisfies $G\cdot \iota(X)=X_G$ and   it follows by \cite[Proposition 3.12]{KL} that  is a homeomorphism onto $\iota(X)$ if and only if   $\theta$ is continuous, and by \cite[Proposition 3.11]{KL}  $\iota(X)$ is open in $X_G,$ provided that $G*X$ is open.

We finish this section with a result that will be useful in the sequel.

\begin{lem}\label{separ}
	Let  $\mu: G\times Y\rightarrow Y$ be a continuous  global action of topological group on a topological space $Y$ and let $U\subseteq Y$  such that  $G\cdot U=Y$. Then the following assertions hold.
\begin{enumerate}
\item [(i)] If  $G$  and $U$ are separable, then $Y$ is separable.
\item [(ii)]If $U$ is closed and regular, then $Y$ is regular.
\end{enumerate}
\end{lem}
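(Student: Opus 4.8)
The plan is to dispatch (i) softly and to concentrate on (ii), which carries the content.

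For (i): since $G\cdot U=Y$ we have $Y=\mu(G\times U)$, so $Y$ is the image of $G\times U$ under the continuous map $\mu\restriction_{G\times U}$. A product of two separable spaces is separable, so if $D\subseteq G$ and $E\subseteq U$ are countable dense sets then $D\times E$ is a countable dense subset of $G\times U$; as the continuous image of a dense set is dense in a surjective image, $\{d\cdot e:d\in D,\ e\in E\}$ is a countable dense subset of $Y$, and (i) follows.

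For (ii) I would first reduce to points of $U$. Since $\mu$ is a global action each $\mu_g\colon Y\to Y$ is a homeomorphism, with inverse $\mu_{g^{-1}}$; hence regularity, being preserved by self-homeomorphisms, holds at $y=g\cdot u$ as soon as it holds at $u$ (transport a closed neighborhood of $u$ inside $\mu_{g^{-1}}(V)$ back by $\mu_g$). As $G\cdot U=Y$, every point of $Y$ lies in such an orbit, so it suffices to prove: for $u\in U$ and open $V\ni u$ in $Y$ there is an open $W$ with $u\in W\subseteq\overline{W}\subseteq V$. Here I would lean on three facts at once: topological groups are regular, $U$ is regular by hypothesis, and $U$ is closed, the last being exactly what identifies the subspace closure with the ambient one, $\overline{O}^{\,U}=\overline{O}^{\,Y}$ for $O\subseteq U$. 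Using continuity of $\mu$ at $(1,u)$ together with regularity of the product $G\times U$, I would produce an open $N\ni1$ in $G$ and an open $O\ni u$ in $U$ with $\overline{N}^{\,G}\cdot\overline{O}^{\,U}\subseteq V$; writing $O=O'\cap U$ with $O'$ open in $Y$, the set $W:=N\cdot O'=\bigcup_{g\in N}\mu_g(O')$ is open in $Y$ and contains $u$.

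The main obstacle is the final estimate $\overline{W}^{\,Y}\subseteq V$. Continuity only hands us the inclusion in the wrong direction, $\overline{N}\cdot\overline{O'}\subseteq\overline{N\cdot O'}$, and the closure of an infinite orbit-saturation $\bigcup_{g\in N}\mu_g(O')$ admits no a priori upper bound unless the group translates are controlled. I would overcome this by shrinking $N$ through the regularity of $G$ — choosing a symmetric $M\ni1$ with $\overline{M}\subseteq N$, so that the closure of $M\cdot O'$ is trapped inside $\overline{N}^{\,G}\cdot\overline{O}^{\,Y}\subseteq V$, where the replacement of $\overline{O'}^{\,Y}$ by the controlled $\overline{O}^{\,Y}=\overline{O}^{\,U}$ is precisely what closedness of $U$ buys. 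Equivalently, separating $u$ from the closed set $A:=Y\setminus V$: continuity gives $N\cdot V_1\cap A=\emptyset$ for opens $1\in N$ and $u\in V_1$, and then $M\cdot V_1\ni u$ and $Y\setminus\overline{M\cdot V_1}\supseteq A$ are the desired disjoint open sets, once $\overline{M\cdot V_1}\subseteq N\cdot V_1$ is secured. Transporting the regularity of $U$ across the orbit directions that protrude from $U$, and thereby securing this closure bound, is where the real work lies and where both hypotheses — $U$ closed and $G$ a regular topological group — are indispensable.
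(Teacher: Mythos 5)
Your part (i) is correct and coincides with the paper's own argument: both take countable dense sets $D\subseteq G$ and $E\subseteq U$ and use that $\mu|_{G\times U}$ is a continuous surjection onto $Y$ (this is exactly $G\cdot U=Y$), so that $\{d\cdot e: d\in D,\ e\in E\}$ is a countable dense subset of $Y$.

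Part (ii), however, is not a proof, and the step you defer cannot be supplied. Your reduction to ambient regularity of $Y$ at points $u\in U$ is fine, and you correctly isolate the obstruction: you need $\overline{M\cdot V_1}\subseteq N\cdot V_1$ (equivalently $\overline{W}^{\,Y}\subseteq V$ for $W=N\cdot O'$), while continuity only yields $\overline{N}\cdot\overline{O'}\subseteq\overline{N\cdot O'}$, and the closure you actually control is that of the trace $O=O'\cap U$, not of the ambient open $O'$, whose part protruding from $U$ is invisible to the hypotheses. But no choice of $M$, symmetric or otherwise, secures the bound, because assertion (ii) is false at this level of generality: take $Y=\mathbb{Z}$ with the cofinite topology, $G=\mathbb{Z}$ discrete acting by translations (each translation is a homeomorphism and $G$ is discrete, so $\mu$ is jointly continuous), and $U=\{0\}$. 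Then $U$ is closed ($Y$ is $T_1$) and regular, and $G\cdot U=Y$; yet in $Y$ every nonempty open set is dense, so no point has a closed neighborhood inside a proper open set and $Y$ is not regular. Since this $G$ is discrete — hence regular and even locally compact — no strengthening on the group side can rescue your scheme; for discrete $G$ one may take $M=N=\{1\}$, and your required inclusion degenerates into exactly the ambient regularity of $Y$ at $u$ that was to be proved.

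For comparison, the paper's proof of (ii) uses none of your machinery (in particular, no regularity of $G$): from $y=g\cdot u\in Z$ it picks $B$ open in $Y$ with $u\in B$ and $g\cdot B\subseteq Z$, uses regularity of $U$ to find $W$ open in $U$ with $u\in W\subseteq Cl_U(W)\subseteq U\cap B$, uses closedness of $U$ to identify $Cl_U(W)$ with $\overline{W}$, and transports by the homeomorphism $\mu_g$ to get $y\in g\cdot W\subseteq \overline{g\cdot W}\subseteq Z$. But the set $g\cdot W$ exhibited there is open only in the subspace $g\cdot U$, not in $Y$ — in the counterexample above it is the non-open singleton $\{g\}$ — so the paper's chain also stops short of producing an ambient closed neighborhood. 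In short, your diagnosis of where the real work lies is accurate, but that work cannot be done from the stated hypotheses: statement (ii) requires an extra assumption (for instance, that $Y$ is covered by the translates of the interior of $U$, in which case the paper's transport argument goes through with $W$ replaced by $W\cap\mathrm{int}(U)$), both in your proposal and in the paper's own proof.
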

\begin{proof}
	(i) Let  $\{u_n: n\in \mathbb{N}\}\subseteq U$ and  $\{g_m: m\in \mathbb{N}\}\subseteq G$ denses subsets of $U$ and $G$, respectively.  Then for an open nonempty set  $V\subseteq Y$ we have that $W:=\mu^{-1}(V)\cap (G\times U)$ is open in $G\times U$.  Then there are  $n,m\in \mathbb{N}$ such that $(g_m,u_n)\in W$ and consequently, $g_m\cdot u_n\in V$ which implies that  $\{g_m\cdot u_n\in Y: m,n\in\mathbb{N}\}$ is dense in $Y$.

(ii)  	Take $y\in Y$ and $Z\subseteq Y$ an open set such that $y\in Z.$ The fact that $G\cdot U=Y$ implies that  there are  $g\in G$, $u\in U$ such that $y=g\cdot u$. Since  $\mu$ is continuous there is an open set $B\subseteq Y$ for which $u\in B$ and  $g\cdot B\subseteq Z$. Then  $V=U\cap B$ is open in $U$ and $g\cdot V\subseteq Z$. Since $U$ is regular, there is an open set $W$ of $U$ such that $u\in W\subseteq Cl_U(W)\subseteq V$ but  $U$ is closed then  \begin{center}
		$y=g\cdot u\in g\cdot W\subseteq g\cdot Cl_U(W)=g\cdot \overline{W}=\overline{g\cdot W}\subseteq g\cdot V\subseteq Z$,
	\end{center}
and $Y$ is regular. 
\end{proof}

\subsection{Conventions on hyperspaces}
{\it From now on in this work $X$ will denote a compact Hausdorff space.} 

The  hyperspace $\mathcal{H}_1:=2^X$ is the set  consisting of non-empty compact subsets of   $X$. For $U_1, U_2,\cdots, U_n$  non-empty open sets of  $X$, let 
$$\langle U_1,...,U_n\rangle_{\mathcal{H}_1}=\left\{A\in \mathcal{H}_1:A\subseteq\bigcup\limits_{i=1}^nU_i\text{ y }A\cap U_i\neq\emptyset,\ 1\leq i\leq n\right\},$$
moreover we set  $\langle \emptyset\rangle:=\emptyset$. 
The {\it vietoris topology} on $\mathcal{H}_1$ is the one generated by open collections of the form  $\langle U_1,...,U_n\rangle.$ 
We shall also work with the subspaces 
$$\mathcal{H}_2:=\{C\in \Hh_1\mid C \text{\,\,is\,\, connected}\}\,\,\,\text{ and}\,\,\,\mathcal{H}_3:=\{F\in \Hh_1\mid F  \rm{\,\,is\,\,finite}\}$$ that is $\langle U_1,\cdots,U_n\rangle_{\mathcal{H}_i}:=\mathcal{H}_i\cap \langle U_1,\cdots,U_n\rangle_{\mathcal{H}_1},$  for $U_1, U_2,\cdots, U_n$ open subsets of $X$ and $i=2,3$. Finally, when taking about a hyperspace $\mathcal{H}$ we make reference to any of the spaces $\mathcal{H}_1, \mathcal{H}_2$ as well as $\Hh_3$.

We summarize some well-known  properties of the space $ \mathcal{H}.$ For more details on hyperspaces, the interested reader may consult \cite{NW}.
\begin{lem}\label{proph} Let $X$ be a compact Hausdorff space. Then the following assertions hold.
 \begin{itemize}
\item [(i)] The map $X\ni x\mapsto \{x\}\in \mathcal{H}$  is an embedding of $X$ into $\Hh.$ 
\item [(ii)] $\mathcal{H}$ is a compact Hausdorff space and the map $u:2^{2^X}\rightarrow 2^X$, $A\mapsto \cup A$ is continuous.
\end{itemize}
\end{lem}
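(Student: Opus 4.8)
For (i), write $\sigma\colon X\to\Hh$ for the map $x\mapsto\{x\}$; it is injective since $\{x\}=\{y\}$ forces $x=y$. The plan is to reduce everything to one observation: a singleton lies in a basic Vietoris open set exactly when its point lies in the intersection of the generating opens. Indeed $\{x\}\in\langle U_1,\dots,U_n\rangle$ means $\{x\}\cap U_i\neq\emptyset$ for every $i$, i.e. $x\in\bigcap_{i=1}^n U_i$ (the containment $\{x\}\subseteq\bigcup_i U_i$ being then automatic). Consequently $\sigma^{-1}(\langle U_1,\dots,U_n\rangle)=\bigcap_{i=1}^n U_i$ is open, so $\sigma$ is continuous; and for a single open $U$ one gets $\sigma(U)=\langle U\rangle\cap\sigma(X)$, so $\sigma$ is open onto its image. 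Hence $\sigma$ is an embedding. Since every singleton is connected and finite, intersecting the basic opens with $\Hh_i$ yields the same computation, so $\sigma$ embeds $X$ into each of $\Hh_1,\Hh_2,\Hh_3$.

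For the first assertion of (ii) I would invoke the standard theory of the Vietoris topology (see \cite{NW}): when $X$ is compact Hausdorff, $2^X$ is compact Hausdorff, the Hausdorff property arising by separating distinct compacta $A\neq B$—say $a\in A\setminus B$—through the disjoint basic opens produced from a Vietoris separation of $a$ from $B$ via normality of $X$. The subspace $\Hh_2$ of connected members is closed in $2^X$ (a Vietoris limit of connected compacta is again connected), hence compact Hausdorff, and $\Hh_3$ inherits the Hausdorff property as a subspace.

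The substantive point, and the step I expect to be the main obstacle, is the continuity of $u\colon 2^{2^X}\to 2^X$, $\mathcal{A}\mapsto\bigcup\mathcal{A}$. First I would check that $u$ is well defined: $\bigcup\mathcal{A}$ is nonempty because the members of $\mathcal{A}$ are, and it is compact because it is the image under the projection $X\times 2^X\to X$ of the intersection of the closed membership set $\{(x,A):x\in A\}$ with the compact set $X\times\mathcal{A}$, hence the continuous image of a compact set. For continuity I would compute the preimage of a basic Vietoris open directly: $\bigcup\mathcal{A}\in\langle U_1,\dots,U_n\rangle$ holds iff (a) every $A\in\mathcal{A}$ satisfies $A\subseteq\bigcup_i U_i$, and (b) for each $i$ some $A\in\mathcal{A}$ meets $U_i$. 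Condition (a) says $\mathcal{A}\subseteq\langle\bigcup_i U_i\rangle$, i.e. $\mathcal{A}\in\langle\langle\bigcup_i U_i\rangle\rangle$; condition (b) says, for each $i$, that $\mathcal{A}$ meets the open set $\langle X,U_i\rangle=\{A:A\cap U_i\neq\emptyset\}$, i.e. $\mathcal{A}\in\langle 2^X,\langle X,U_i\rangle\rangle$. Thus, using $\langle\cdot\rangle$ for the Vietoris opens of the relevant hyperspace,
\[
u^{-1}\big(\langle U_1,\dots,U_n\rangle\big)=\big\langle\langle{\textstyle\bigcup_{i}U_i}\rangle\big\rangle\cap\bigcap_{i=1}^{n}\big\langle 2^X,\langle X,U_i\rangle\big\rangle ,
\]
a finite intersection of Vietoris opens in $2^{2^X}$, hence open. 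Once conditions (a) and (b) are correctly translated into genuine bracket expressions of the double hyperspace, the verification is routine, and this gives continuity of $u$ and completes the lemma.
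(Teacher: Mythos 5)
Your proposal is correct, but there is nothing in the paper to compare it against line by line: the paper states this lemma without proof, as a summary of well-known facts with a pointer to \cite{NW}, so you have supplied a self-contained argument where the authors rely on the literature. Your computations are the standard ones and they all check out: the identity $\sigma^{-1}(\langle U_1,\dots,U_n\rangle)=\bigcap_{i=1}^n U_i$ together with $\sigma(U)=\langle U\rangle\cap\sigma(X)$ gives the embedding in (i); compactness and Hausdorffness of $2^X$ is Vietoris' classical theorem; and your subbase formula $u^{-1}\bigl(\langle U_1,\dots,U_n\rangle\bigr)=\bigl\langle\langle\bigcup_i U_i\rangle\bigr\rangle\cap\bigcap_{i=1}^n\bigl\langle 2^X,\langle X,U_i\rangle\bigr\rangle$ is exactly right, since for nonempty members the single-bracket set $\langle W\rangle$ is $\{A: A\subseteq W\}$ and $\langle X,U_i\rangle=\{A: A\cap U_i\neq\emptyset\}$, so conditions (a) and (b) translate as you claim; checking preimages of basic opens suffices because these form a base. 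Two remarks. First, in the well-definedness step you use without proof that the membership set $M=\{(x,A): x\in A\}$ is closed in $X\times 2^X$; this is routine (if $x\notin A$, separate $x$ from the compact set $A$ by disjoint open sets $U\ni x$ and $V\supseteq A$, and note $U\times\langle V\rangle$ misses $M$), but it deserves a sentence, as it is precisely where the Hausdorff hypothesis on $X$ enters. Second, and in your favor: your careful hedge that $\Hh_3$ merely ``inherits the Hausdorff property'' is not just caution but necessity. For infinite $X$ the space $\Hh_3$ of finite subsets is dense in $2^X$ (any basic open $\langle U_1,\dots,U_n\rangle$ contains a finite set $\{x_1,\dots,x_n\}$ with $x_i\in U_i$), so if it were compact it would be closed and hence equal to $2^X$, forcing $X\in\Hh_3$, i.e.\ $X$ finite. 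Thus the lemma's blanket claim that $\Hh$ is compact Hausdorff is an overstatement for $\Hh=\Hh_3$, and your proof correctly establishes compactness only for $\Hh_1$ and for the closed subspace $\Hh_2$ (closed since a disconnected compactum $A=A_1\sqcup A_2$ is separated by disjoint opens $U_1\supseteq A_1$, $U_2\supseteq A_2$, and $\langle U_1,U_2\rangle$ consists of disconnected sets).
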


\section{From partial actions on $X$ to partial actions on $\Hh$} \label{XtoH} 
In what follows we shall use a continuous partial action on $X$ to construct a continuous partial action on $ \mathcal{H}.$ We start with the next.

\begin{lem}\label{lem1}
	Let  $U$ and $V$ be open subsets of  $X$ and $f:U\rightarrow V$ a homeomorphism, then the map $2^f: \langle U\rangle_{\mathcal{H}} \ni A \mapsto f(A)\in  \langle V\rangle_{\mathcal{H}}$
	is a  homeomorphism.
\end{lem}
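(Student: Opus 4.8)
The plan is to verify directly that $2^f$ is a well-defined bijection onto $\langle V\rangle_{\mathcal H}$ whose inverse is $2^{f^{-1}}$, and then to establish continuity of both $2^f$ and its inverse by computing preimages of basic Vietoris open sets.

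First I would check that $2^f$ is well defined, i.e. that $f(A)\in\langle V\rangle_{\mathcal H}$ whenever $A\in\langle U\rangle_{\mathcal H}$. Note that $\langle U\rangle_{\mathcal H}$ consists of the nonempty compact subsets $A$ of $X$ with $A\subseteq U$. For such $A$, the image $f(A)$ is a nonempty compact subset of $V$ because $f$ is continuous, so $f(A)\in\langle V\rangle_{\mathcal H_1}$; moreover, since $f$ is a homeomorphism it preserves connectedness and cardinality, so $f(A)$ is connected (resp.\ finite) whenever $A$ is, which covers the cases $\mathcal H=\mathcal H_2$ and $\mathcal H=\mathcal H_3$. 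Bijectivity is then immediate: the assignment $2^{f^{-1}}\colon\langle V\rangle_{\mathcal H}\ni B\mapsto f^{-1}(B)\in\langle U\rangle_{\mathcal H}$ is well defined by the same argument applied to $f^{-1}$, and since $f$ is a set bijection one has $f^{-1}(f(A))=A$ and $f(f^{-1}(B))=B$, so $2^{f^{-1}}=(2^f)^{-1}$.

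The core of the argument is continuity, which I would prove by showing that the preimage under $2^f$ of a basic Vietoris open set is again basic open. Given open sets $W_1,\dots,W_n$ of $X$, put $W_i'=f^{-1}(W_i)=\{u\in U:f(u)\in W_i\}$; each $W_i'$ is open in $U$, hence open in $X$ because $U$ is open in $X$. I would then verify the identity
$$(2^f)^{-1}\big(\langle W_1,\dots,W_n\rangle_{\mathcal H}\big)=\langle W_1',\dots,W_n'\rangle_{\mathcal H},$$
which follows from the two equivalences $f(A)\subseteq\bigcup_i W_i\iff A\subseteq\bigcup_i W_i'$ and $f(A)\cap W_i\neq\emptyset\iff A\cap W_i'\neq\emptyset$, both obtained by applying the bijection $f$ and using $f(A)\subseteq V$. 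Since each $W_i'\subseteq U$, the right-hand side is automatically contained in $\langle U\rangle_{\mathcal H}$ and is open in $\mathcal H$, hence open in the subspace $\langle U\rangle_{\mathcal H}$; this proves that $2^f$ is continuous. Applying the same computation to $f^{-1}$ shows that $(2^f)^{-1}=2^{f^{-1}}$ is continuous as well, and therefore $2^f$ is a homeomorphism.

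I expect the only delicate point to be bookkeeping rather than a genuine obstacle: one must keep track of the distinction between preimages relative to $V$ and relative to $X$ (ensuring that $W_i'$ is open in $X$, not merely in $U$), and confirm that the three hyperspaces $\mathcal H_1,\mathcal H_2,\mathcal H_3$ are handled uniformly, which they are precisely because $f$, being a homeomorphism, simultaneously preserves compactness, connectedness, and finiteness.
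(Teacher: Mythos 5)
Your proposal is correct and follows essentially the same route as the paper: both reduce continuity to pulling back the finite families of open sets defining basic Vietoris neighborhoods through $f$ (the paper verifies continuity pointwise via the neighborhood $\langle U\rangle_{\mathcal H}\cap\langle f^{-1}(V_1\cap V),\dots,f^{-1}(V_k\cap V)\rangle_{\mathcal H_1}$, while you state the sharper exact identity $(2^f)^{-1}\langle W_1,\dots,W_n\rangle_{\mathcal H}=\langle f^{-1}(W_1),\dots,f^{-1}(W_n)\rangle_{\mathcal H}$, which is the same computation packaged globally), and both conclude via $(2^f)^{-1}=2^{f^{-1}}$. Your explicit verification that $f$ preserves connectedness and finiteness handles the cases $\mathcal H_2$ and $\mathcal H_3$ that the paper dismisses as ``similar,'' which is a welcome bit of extra care but not a different argument.
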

\begin{proof} We shall prove the case $\mathcal{H}=\mathcal{H}_2$ the cases $\Hh=\Hh_1$ and $\Hh_3$ are similar.   First of all notice that 
	$2^f$ is well defined since  $f$ is continuous. We check that it is continuous.
	Take $A\in \langle U\rangle_{\mathcal{H}_2}$ and $V_1,\cdots,V_k$ open subsets of $X$ such  that  $$f(A)\in \langle V\rangle_{\mathcal{H}_2} \cap \langle V_1,\cdots,V_k\rangle_{\mathcal{H}_1}=\mathcal{H}_2\cap\langle V\rangle_{\mathcal{H}_1}\cap \langle V_1,\cdots,V_k\rangle_{\mathcal{H}_1} =\langle V\cap V_1,\cdots,V\cap V_k\rangle_{\mathcal{H}_2}.$$ 
	Consider the open subset of  $\langle U\rangle_{\mathcal{H}_2}$ defined by $Z:=\langle U\rangle_{\mathcal{H}_2} \cap \langle f^{-1}(V_1\cap V),\cdots,f^{-1}(V_k\cap V)\rangle_{\mathcal{H}_1}.$
	Note that  $A\in Z$ because  $f(A)\subseteq \bigcup\limits_{i=1}^k (V\cap V_i)$, thus $A\subseteq \bigcup\limits_{i=1}^kf^{-1}(V\cap V_i).$ Moreover, since $f(A)\cap (V\cap V_i)\neq\emptyset$, we see that  $A\cap f^{-1}(V\cap V_i)\neq \emptyset$, for each $i=1,\cdots,k$.  To check that $2^f(Z)\subseteq \langle V\rangle_{\mathcal{H}_2} \cap \langle V_1,\cdots,V_k\rangle_{\mathcal{H}_1}$, take  $W\in Z$. Since  $W\subseteq \bigcup\limits_{i=1}^k f^{-1}(V_i\cap V)$, we get $2^f(W)\subseteq \bigcup\limits_{i=1}^k (V_i\cap V)$. Also  $W\cap f^{-1}(V_i\cap V)\neq\emptyset$ gives  $2^f(W)\cap (V_i\cap V)\neq\emptyset$ for each $i=1,\cdots,k$ and we conclude that   $2^f$ is continuous. Finally, since $(2^f)^{-1}=2^{f^{-1}}$, the map $2^f$ is a homeomorphism, as desired. 
\end{proof}

\begin{teo}\label{teo2.8}
	Let $\theta:=(\theta_g, X_g)_{g\in G}$ be a  topological partial  action of $G$ on $X$. For  $g\in G$, we set  $2^{\theta_g}:\langle X_{g^{-1}}\rangle_{\mathcal{H}}\ni A \mapsto \theta_g(A)\in  \langle X_g \rangle_{\mathcal{H}}.$ Then $2^{\theta}=(2^{\theta_g}, \langle X_g \rangle_{\mathcal{H}})_{g\in G}$ is a topological partial  action of $G$ on $\mathcal{H}$ and the following assertions hold. 
\begin{enumerate}
\item [(i)] $G*\Hh$ is open provided that $G*X$ is open.
\item [(ii)] If $\theta$ is continuous, then $2^\theta$ is continuous.
\item [(iii)] If $\theta$ is global then $2^\theta$ is global.
\end{enumerate}
\end{teo}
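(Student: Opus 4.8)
The plan is to first verify that $2^\theta$ satisfies the axioms of Proposition \ref{prop2.3}, and then to treat (i)--(iii) in turn. The two facts that will do most of the structural work are Lemma \ref{lem1}, which already gives that each $2^{\theta_g}\colon \langle X_{g^{-1}}\rangle_{\mathcal{H}}\to\langle X_g\rangle_{\mathcal{H}}$ is a homeomorphism between the Vietoris-open sets $\langle X_{g^{-1}}\rangle_{\mathcal{H}}$ and $\langle X_g\rangle_{\mathcal{H}}$, and the elementary identity $\langle U\rangle_{\mathcal{H}}\cap\langle V\rangle_{\mathcal{H}}=\langle U\cap V\rangle_{\mathcal{H}}$ for open $U,V\subseteq X$, which holds because $\langle U\rangle_{\mathcal{H}}=\{A\in\mathcal{H}\colon A\subseteq U\}$ (the condition $A\cap U\neq\emptyset$ being automatic for nonempty $A\subseteq U$). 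With these in hand, axiom (i) is immediate since $X_1=X$ forces $\langle X_1\rangle_{\mathcal{H}}=\mathcal{H}$ and $2^{\theta_1}=2^{\mathrm{id}_X}=\mathrm{id}_{\mathcal{H}}$. For axiom (ii) I would compute $2^{\theta_g}(\langle X_{g^{-1}}\rangle_{\mathcal{H}}\cap\langle X_h\rangle_{\mathcal{H}})=2^{\theta_g}(\langle X_{g^{-1}}\cap X_h\rangle_{\mathcal{H}})$ and use that $\theta_g$, being a homeomorphism, sends the compact (resp. connected, finite) subsets of $X_{g^{-1}}\cap X_h$ exactly onto those of $\theta_g(X_{g^{-1}}\cap X_h)=X_g\cap X_{gh}$, the last equality being axiom (ii) for $\theta$; this yields $\langle X_g\cap X_{gh}\rangle_{\mathcal{H}}=\langle X_g\rangle_{\mathcal{H}}\cap\langle X_{gh}\rangle_{\mathcal{H}}$. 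Axiom (iii) then follows pointwise: for $A\subseteq X_{h^{-1}}\cap X_{h^{-1}g^{-1}}$ one has $2^{\theta_g}2^{\theta_h}(A)=\theta_g(\theta_h(A))=\theta_{gh}(A)=2^{\theta_{gh}}(A)$, applying axiom (iii) for $\theta$ to each point of $A$.

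For (i), assuming $G*X$ open, I would show $G*\mathcal{H}=\{(g,A)\colon A\subseteq X_{g^{-1}}\}$ is open by a compactness argument. Fix $(g_0,A_0)\in G*\mathcal{H}$; then $(g_0,a)\in G*X$ for every $a\in A_0$, so openness of $G*X$ provides basic open boxes $N_a\times V_a\subseteq G*X$ with $g_0\in N_a$ and $a\in V_a$. Compactness of $A_0$ extracts a finite subcover $V_{a_1},\dots,V_{a_k}$ of $A_0$; setting $N=\bigcap_i N_{a_i}$ I claim $N\times\langle V_{a_1},\dots,V_{a_k}\rangle_{\mathcal{H}}$ is a neighbourhood of $(g_0,A_0)$ inside $G*\mathcal{H}$, because any $(g,B)$ in it has every $b\in B$ lying in some $V_{a_i}$ with $g\in N\subseteq N_{a_i}$, whence $(g,b)\in G*X$ and so $B\subseteq X_{g^{-1}}$.

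The main obstacle is (ii), the continuity of $2^\theta\colon G*\mathcal{H}\ni(g,A)\mapsto\theta_g(A)\in\mathcal{H}$, and here I would argue directly with the Vietoris basis while working inside the subspace $G*\mathcal{H}$ (so that membership in $G*\mathcal{H}$ supplies, for free, that $(g,b)\in G*X$ for each $b$ in the relevant compact set, avoiding any appeal to openness of $G*X$). Fix $(g_0,A_0)\in G*\mathcal{H}$ and a basic open $\langle W_1,\dots,W_m\rangle_{\mathcal{H}}$ containing $\theta_{g_0}(A_0)$. To handle the upper condition $\theta_g(B)\subseteq\bigcup_l W_l$, for each $a\in A_0$ I would use continuity of $\theta$ at $(g_0,a)$ to find $N_a\times V_a$ with $\theta((N_a\times V_a)\cap(G*X))\subseteq W_{l(a)}$, where $\theta_{g_0}(a)\in W_{l(a)}$, and then pass to a finite subcover $V_{a_1},\dots,V_{a_k}$ of $A_0$. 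To handle the lower condition $\theta_g(B)\cap W_l\neq\emptyset$, for each $l$ I would pick $a_l\in A_0$ with $\theta_{g_0}(a_l)\in W_l$ and, by continuity, shrink to $M_l\times U_l$ with $\theta((M_l\times U_l)\cap(G*X))\subseteq W_l$. Taking $N=\bigcap_i N_{a_i}\cap\bigcap_l M_l$ together with the Vietoris neighbourhood $\langle V_{a_1},\dots,V_{a_k},U_1,\dots,U_m\rangle_{\mathcal{H}}$ (which contains $A_0$ since each $a_i,a_l\in A_0$), I would check that $(N\times\langle V_{a_1},\dots,V_{a_k},U_1,\dots,U_m\rangle_{\mathcal{H}})\cap(G*\mathcal{H})$ maps under $2^\theta$ into $\langle W_1,\dots,W_m\rangle_{\mathcal{H}}$: the upper condition holds because every point of such a $B$ lands in some $W_l$, and the lower condition because $B\cap U_l\neq\emptyset$ forces $\theta_g(B)\cap W_l\neq\emptyset$. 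Throughout, the fact that $\theta_g$ is a homeomorphism guarantees that $\theta_g(B)$ remains connected or finite, so the argument descends verbatim to $\mathcal{H}_2$ and $\mathcal{H}_3$.

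Finally, (iii) is immediate: if $\theta$ is global then $X_{g^{-1}}=X$ for all $g$, so $\langle X_{g^{-1}}\rangle_{\mathcal{H}}=\langle X\rangle_{\mathcal{H}}=\mathcal{H}$, that is, each $2^{\theta_g}$ is defined on all of $\mathcal{H}$ and hence $2^\theta$ is global.
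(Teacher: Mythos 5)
Your proposal is correct and follows essentially the same route as the paper: Lemma \ref{lem1} plus verification of the axioms in Proposition \ref{prop2.3}, the identical compactness-and-finite-subcover argument for openness of $G*\Hh$, and the same Vietoris-basis chase for continuity, noting as the paper does that membership in $G*\Hh$ already supplies $(g,b)\in G*X$. The only cosmetic difference is in (ii): where the paper merges the lower-condition witnesses into the covering family by a without-loss-of-generality relabeling of the $a_i$'s, you append the separate sets $U_1,\dots,U_m$ to the Vietoris tuple, which is an equally valid bookkeeping choice.
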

\begin{proof}
	As in the proof of Lemma \ref{lem1} we shall only deal with the case  ${\mathcal{H}}=\mathcal{H}_2.$ By Lemma \ref{lem1} we have that  $2^{\theta_g}$ is a  homeomorphism between open subsets of  $\Hh_2,$ for any  $g\in G$.  We shall check that conditions  (i)-(iii) in Proposition \ref{prop2.3} hold.  To see (i)  notice that $2^{\theta_e}$ is the identity map of  $\langle X\rangle_{\mathcal{H}_2}=\mathcal{H}_2$.  For (ii) take  $g,h\in G$ and  $A\in\langle X_{g^{-1}}\rangle_{\mathcal{H}_2} \cap \langle X_h\rangle_{\mathcal{H}_2}=\langle X_{g^{-1}}\cap X_h\rangle_{\mathcal{H}_2}$, then 
		$2^{\theta_g}(A)=\theta_g(A)\subseteq \theta_g(X_{g^{-1}}\cap X_h)\subseteq X_{gh}$, and thus  $2^{\theta_g}(\langle X_{g^{-1}}\rangle_{\mathcal{H}_2}\cap \langle X_h\rangle_{\mathcal{H}_2})\subseteq \langle X_{gh}\rangle_{\mathcal{H}_2}$. For (iii) take  $A\in \langle X_{h^{-1}}\rangle_{\mathcal{H}_2} \cap \langle X_{(gh)^{-1}}\rangle_{\mathcal{H}_2}=\langle X_{h^{-1}}\cap X_{(gh)^{-1}}\rangle_{\mathcal{H}_2}$, then  \begin{center}
		$2^{\theta_{gh}}(A)=\theta_{gh}(A)=\theta_g(\theta_h(A))=2^{\theta_g}(2^{\theta_h}(A)),$
	\end{center}
and we conclude that $2^{\theta}$ is a partial action of $G$ on $\Hh_2.$ Now we check $(i)-(iii)$.

$(i)$ Suppose that $G*X$ is open in $G\times X$. To see that    $G*\mathcal{H}_2$ is open in  $G\times \mathcal{H}_2$, take $(g,A)\in G*\mathcal{H}_2$. Since $A\subseteq X_{g^{-1}}$, we have  $(g,a)\in G*X$ for all $a\in A$. Now the fact that  $G*X$ is an open subset of $G\times X$, implies that for any $a\in A$ there are open sets  $U_a\subseteq G$ and $V_a\subseteq X$ for which  $(g,a)\in U_a\times V_a\subseteq G*X$. Since  $A$ is compact, there exist $a_1,\cdots,a_n\in A$ with  $A\subseteq \bigcup\limits_{i=1}^n V_{a_i}$, and $A\in \langle V_{a_1},\cdots,V_{a_n}\rangle_{\Hh_2}$. Let  $U:=\bigcap\limits_{i=1}^nU_{a_i}$, then $(g,A)\in U\times \langle V_{a_1},\cdots,V_{a_n}\rangle_{\Hh_2}$ we claim that  
		$ U\times \langle V_{a_1},\cdots,V_{a_n}\rangle_{\Hh_2}\subseteq G*\mathcal{H}_2$.    
Indeed, take $(h,B)\in  U\times \langle V_{a_1},\cdots,V_{a_n}\rangle_{\Hh_2},$ we shall check  $B\in \langle X_{h^{-1}}\rangle_{\Hh_2}$. Take  $b\in B$. Since   $B\subseteq \bigcup\limits_{i=1}^nV_{a_i}$,  there is  $1\leq i\leq n$ for which $b\in V_{a_i}$ and  $(h,b)\in U_{a_i}\times V_{a_i}\subseteq G*X$, then  $b\in X_{h^{-1}}$. From this we get  $B\in \langle X_{h^{-1}}\rangle_{\Hh_2}$ and thus  $(h,B)\in G*\mathcal{H}_2$. This shows that  $G*\mathcal{H}_2$ is open in  $G\times \mathcal{H}_2$.
	
	$(ii)$ Suppose that $\theta$ is continuous. We need to show that 
		$2^{\theta}: G*\mathcal{H}_2\rightarrow \mathcal{H}_2$, $(g,A)\mapsto \theta_g(A)$ is continuous. Let  $(g,A)\in G*\mathcal{H}_2$ and take $V_1,\cdots,V_k$ open subsets of $X$ such that $\theta_g(A)\in \langle V_1,\cdots,V_k\rangle_{\Hh_2}$. For each $a\in A$ there is  $1\leq i_a\leq k$ such that  $\theta_g(a)\in V_{i_a}$, and since  $\theta$  is continuous there are open sets  $U_{i_a}\subseteq G$ and $W_{i_a}\subseteq X$ such that: \begin{center}
		$(g,a)\in (U_{i_a}\times W_{i_a})\cap (G*X)$ and  $\theta((U_{i_a}\times W_{i_a})\cap G*X)\subseteq V_{i_a}$.    
	\end{center}
	The fact that  $A$ is compact implies that there are  $a_1,\cdots,a_m\in A$ such that  $A\subseteq \bigcup\limits_{j=1}^m W_{i_{a_j}}$. 
	
	On the other hand, since  $\theta_g(A)\cap V_j\neq\emptyset$, for any  $1\leq j\leq k$, there are  $r_1,r_2,\cdots,r_k\in A$ for which $\theta_g(r_j)\in V_j$, for all  $j=1,2,\cdots,k$. Set  $i_{r_j}:=j$, $j=1,\cdots,k$. Without loss of generality we may suppose  $\{r_1,r_2,\cdots,r_k\}\subseteq \{a_1,\cdots,a_m\}$ and  $r_i=a_i$, for each $i=1,\cdots,k$. Let  $U:=\bigcap_{j=1}^m U_{i_{a_j}}$. Then  $(g,A)\in Z:=(U\times \langle W_{i_{a_1}},\cdots,W_{i_{a_m}}\rangle_{\Hh_2})\cap (G*\mathcal{H}_2).$ To finish the proof it is enough to show that  $2^{\theta}(Z)\subseteq \langle V_1,\cdots,V_k\rangle_{\Hh_2}$. For this take $(h,B)\in Z$ and  $b\in B$. Since   $B\subseteq \bigcup\limits_{j=1}^m W_{i_{a_j}}$, there exists $1\leq j\leq m$ such that $b\in W_{i_{a_j}}$ and thus $(h,b)\in U_{i_{a_j}}\times W_{i_{a_j}}$. But $B\subseteq X_{h^{-1}}$, then  $(h,b)\in (U_{i_{a_j}}\times W_{i_{a_j}})\cap (G*X) $ and  $\theta_h(b)\in V_{i_{a_j}}$ which implies   $\theta_h(B)\subseteq \bigcup\limits_{i=1}^kV_i$. Finally,  for  $1\leq l\leq k$ we see that  $\theta_h(B)\cap V_l\neq\emptyset$.  Indeed, take  $b\in B\cap W_{i_{a_l}}$, where $a_l=r_l$.  Since $h\in U_{i_{a_l}}$, we have  $(h,b)\in (U_{i_{a_l}}\times W_{i_{a_l}})\cap (G*X)$ and $\theta_h(b)\in V_{i_{a_l}}\cap \theta_h(B)=V_{i_{r_l}}\cap\theta_h(B)=V_l\cap \theta_h(B)$ which finishes the proof of the second item.  

$(iii)$ This is clear.
\end{proof}
\begin{rem} Given a partial action $\theta$ of $G$ on $X,$ we shall refer to $2^\theta$ as the induced partial action of $\theta$  on $\mathcal{H}.$
\end{rem}

\begin{ejem}
	There is a  topological partial action of $\mathbb{Z}_4$ on $S^1$ given by  the family  $\{X_n\}_{n\in \mathbb{Z}_4}$ as it is shown below. 
	\begin{center}
		\begin{tikzpicture}[scale=1.5]
		\draw[green!50!black] (0.7071,-0.7071) arc (-45:45:1) (1,0) node[anchor=west]{$X_1$};
		\draw[blue!50!black] (-0.7071,0.7071) arc (135:225:1) (-1.5,0) node[anchor=west]{$X_3$};
		\draw[red!50!black] (0.7071,0.7071) arc (45:135:1)  (-0.7071,-0.7071) arc (225:315:1) (-0.2,1.2) node[anchor=west]{$X_2$};
		\draw (0.7071,-0.7071) circle (1pt) (0.7071,0.7071) circle (1pt) (-0.7071,0.7071) circle (1pt) (-0.7071,-0.7071) circle (1pt);
		\end{tikzpicture}
	\end{center}

	\begin{itemize}
		\item[] $\theta_0={\rm Id}_{\mathcal  S^1}$; $\theta_1:X_3\to X_1$ by $\theta_1(e^{it})=e^{i(t+\pi)}$;  $\theta_3=\theta_1^{-1}$, $\theta_2:X_2\to X_2$ is the identity.
	\end{itemize}
	We construct the induced partial action of  $\mathbb{Z}_4$  on  $C(\mathcal S^1)$, for this we  find a  homeomorphism $h$ between  $C(\mathcal S^1),$ the connected sets of $S^1$  and  $D=\{z\in \mathbb{C}:|z|\leq 1\}.$
	\begin{multicols}{2}
		\begin{center}
			\begin{tikzpicture}
			\draw[very thick]  (1.5,0) arc (0:120:1.5);
			\draw (0,0) circle (1.5)  (-0.4,-0.15) node[anchor=west]{$O$} (0.4,0.6) node[anchor=west]{\scriptsize{\textcolor{blue}{$h(A)$}}} (0.7,1.5) node[anchor=west]{$P$} (1.45,0.3) node[anchor=west]{$A$};
			\draw[dashed] (0,0) -- (0.8,1.385);
			\filldraw (0.41,0.74) circle (2pt);
			\end{tikzpicture}
		\end{center}
	
\small{Let  $P\in \mathcal S^1$ and take an arc center at  $P$ of length \textit{l}, this arc is mapped  on  $h(A)=\left(1-\frac{l}{2\pi}\right)P\in D$. The arc $\{P\}$ of length zero is mapped onto \textcolor{blue}{$h(\{P\})=P\in D$}}
\normalsize
	\end{multicols}
	\noindent In particular, all arcs  centered at  $P$ are mapped on $\overline{OP}$. From this follows that the sets   $\{\langle X_n\rangle_{\Hh_2}\}_{n\in \mathbb{Z}_4}$ are 
	\begin{center}
		\begin{tikzpicture}[scale=1.5]
		\filldraw [green!30] (-0.7071,-0.7071)--(0,0)--(0.7071,-0.7071) arc (-45:45:1);
		\filldraw [blue!30] (-0.7071,-0.7071)--(0,0)--(-0.7071,0.7071) arc (135:225:1);
		\filldraw [red!30] (-0.7071,0.7071)--(0,0)--(0.7071,0.7071) arc (45:135:1)
		(0.7071,-0.7071)--(0,0)--
		  (-0.7071,-0.7071) arc (225:315:1);
		
		\draw [dashed] (-0.7071,-0.7071)--(0.7071,0.7071) (-0.7071,0.7071)--(0.7071,-0.7071) ;
		\draw[green!50!black] (0.7071,-0.7071) arc (-45:45:1) (1,0) node[anchor=west]{$\langle X_1\rangle_{\Hh_2}$};
		\draw[blue!50!black] (-0.7071,0.7071) arc (135:225:1) (-1.9,0) node[anchor=west]{$\langle X_3\rangle_{\Hh_2}$};
		\draw[red!50!black] (0.7071,0.7071) arc (45:135:1)  (-0.7071,-0.7071) arc (225:315:1) (-0.3,1.2) node[anchor=west]{$\langle X_2\rangle_{\Hh_2}$};
		\draw (0.7071,-0.7071) circle (1pt) (0.7071,0.7071) circle (1pt) (-0.7071,0.7071) circle (1pt) (-0.7071,-0.7071) circle (1pt);
		\end{tikzpicture}
	\end{center}
We construct $2^\theta.$ The map	$2^{\theta_2}$ is the identity on $\langle X_2\rangle_{\Hh_2}$.  Notice that  $2^{\theta_1}$ rotates each arc  in $X_3$ $\pi$ radians to an arc  in $X_1$ of the same length. 
	\begin{multicols}{2}
		\begin{center}
			\begin{tikzpicture}
			[scale=1.5]
			\draw[green!50!black] (0.7071,-0.7071) arc (-45:45:1) (1,0) node[anchor=west]{$X_1$};
			\draw[blue!50!black] (-0.7071,0.7071) arc (135:225:1) (-1.5,0) node[anchor=west]{$X_3$};
			\draw[red!20] (0.7071,0.7071) arc (45:135:1)  (-0.7071,-0.7071) arc (225:315:1) (-0.2,1.2) node[anchor=west]{$X_2$};
			\draw (0.7071,-0.7071) circle (1pt) (0.7071,0.7071) circle (1pt) (-0.7071,0.7071) circle (1pt) (-0.7071,-0.7071) circle (1pt);
			\draw[very thick]  (-0.819,0.573) arc (145:170:1) (0.8191,-0.5735) arc (325:350:1);
			\draw[dashed] (-0.9238,0.3826)--(0.9238,-0.3826);
			\filldraw (0,0) circle (0.7 pt) (-0.7,0.29) circle (0.7 pt) (0.7,-0.29) circle (0.7 pt);
			\draw (-1.2,0.6) node[anchor=west]{$A$} (0.8,-0.6) node[anchor=west]{$2^{\theta_1}(A)$} (-0.8,0.4) node[anchor=west]{\tiny{$h(A)$}} (0,-0.4) node[anchor=west]{\tiny{$h(2^{\theta_{\tiny{1}}}(A))$}};
			\end{tikzpicture}
		\end{center}
Then $h(2^{\theta_1}(A))$ is obtained by rotating  $h(A)$ $\pi$ radians, from this  $2^{\theta_1}$ in $D$ is identified with $2^{\theta_1}:\langle X_3\rangle_{\Hh_2}\longrightarrow\langle X_1\rangle_{\Hh_2}$, $re^{it}\longmapsto re^{i(t+\pi)},$ analogously $2^{\theta_3}:\langle X_1\rangle_{\Hh_2}\longrightarrow\langle X_3\rangle_{\Hh_2}$, $re^{it}\longmapsto re^{i(t+\pi)}$.
	\end{multicols}
\end{ejem}

We finish this section with the next.

\begin{pro}\label{cer}
	Let $\theta$ be a topological partial action of $G$ on $X$. If $G*X$ is closed, then   $G*\Hh$ is closed.
\end{pro}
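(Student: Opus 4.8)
The plan is to show that the complement of $G*\Hh$ in $G\times\Hh$ is open. First I would record the explicit description of the domain coming from Theorem \ref{teo2.8}: since for nonempty compact $A$ the condition $A\in\langle X_{g\m}\rangle_{\Hh}$ is equivalent to $A\subseteq X_{g\m}$, we have
$G*\Hh=\{(g,A)\in G\times\Hh\mid A\subseteq X_{g\m}\}$, while by definition $G*X=\{(g,x)\mid x\in X_{g\m}\}$ is assumed closed. The guiding idea is that a pair $(g,A)$ fails to lie in $G*\Hh$ precisely when $A$ contains a single point $a$ with $(g,a)\notin G*X$, and I want to turn the closedness of $G*X$ at that one point into an open neighborhood of $(g,A)$ inside the complement.

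Concretely, fix $(g,A)\notin G*\Hh$, so that $A\not\subseteq X_{g\m}$ and hence there is $a\in A$ with $(g,a)\notin G*X$. Using that $G*X$ is closed, I would choose open sets $U\subseteq G$ and $V\subseteq X$ with $g\in U$, $a\in V$ and $(U\times V)\cap(G*X)=\emptyset$. The key step is then to use the Vietoris \emph{hit} set $\langle X,V\rangle_{\Hh}=\{B\in\Hh\mid B\cap V\neq\emptyset\}$, which is a basic Vietoris open subset of $\Hh$ (as $X$ is open and nonempty), to form the neighborhood $U\times\langle X,V\rangle_{\Hh}$ of $(g,A)$; note $a\in A\cap V$ guarantees $A\in\langle X,V\rangle_{\Hh}$. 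I then claim this neighborhood is disjoint from $G*\Hh$: if some $(h,B)$ lay in both, then $B\subseteq X_{h\m}$ and there would be a point $b\in B\cap V$, forcing $(h,b)\in G*X$ while simultaneously $(h,b)\in U\times V$, contradicting the choice of $U$ and $V$. This exhibits an open neighborhood of each point of the complement, so $G*\Hh$ is closed.

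As in Lemma \ref{lem1} and Theorem \ref{teo2.8}, the argument is uniform across the three hyperspaces, since $\langle X,V\rangle_{\Hh_i}=\Hh_i\cap\langle X,V\rangle_{\Hh_1}$ is open in each $\Hh_i$ and its members are nonempty compact sets in all cases, so no connectedness or finiteness hypothesis interferes. I do not expect a genuine obstacle here; the only conceptual point is to realize that the right Vietoris open set to invoke is the hit set $\langle X,V\rangle_{\Hh}$ rather than a containment-type set $\langle\,\cdot\,\rangle$, which is exactly what allows a single offending point $a\in A$ to propagate to an entire neighborhood and to interact cleanly with the closedness of $G*X$.
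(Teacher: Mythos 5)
Your proof is correct and follows essentially the same route as the paper: both arguments pick a single offending point $a\in A$ with $(g,a)\notin G*X$, use closedness of $G*X$ to obtain a box $U\times V$ missing $G*X$, and then observe that $U\times\langle X,V\rangle_{\Hh}$ (the paper writes it $\langle V,X\rangle_{\Hh}$, the same basic Vietoris set) is a neighborhood of $(g,A)$ disjoint from $G*\Hh$, since any $(h,B)$ in it has a point $b\in B\cap V$ with $(h,b)\in U\times V$, so $B\not\subseteq X_{h\m}$. Your explicit remark that the hit set, rather than a containment set, is the right choice, and that the argument is uniform over $\Hh_1,\Hh_2,\Hh_3$, matches what the paper does implicitly.
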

\begin{proof}
	Take $(g,A)\in (G\times \mathcal{H})\setminus G*\mathcal{H}$. Then there is  $a\in A$ such that  $\nexists g\cdot a$ and $(g,a)\notin G*X$ and there are open sets  $U\subseteq G$ and $V\subseteq X$ such that $(g,a)\in U\times V \subseteq (G\times X)\setminus G*X$. Note that $(g,A)\in U\times \langle V,X\rangle_{\mathcal{H}}$  to finish the proof we need to show that  $U\times \langle V,X\rangle_{\mathcal{H}}\subseteq (G\times \mathcal{H})\setminus G*\mathcal{H}$. Take  $(h,B)\in U\times\langle V,X\rangle_{\mathcal{H}}$ and  $b\in B\cap V$. Since  $(h,b)\in U\times V$, we get  $\nexists h\cdot b$, then $\nexists h\cdot B$ and  $(h,B)\notin G*\mathcal{H}$ as desired.
\end{proof}

\subsection{Separation properties and enveloping spaces }\label{apglob}
It is shown in \cite[Proposition 1.2]{AB} that a partial action has a Hausdorff enveloping space if and only if the graph of the action is closed. Below we show  that partial actions on  compact Hausdorff spaces have Hausdorff enveloping space, 
 if and only if  the enveloping space of the induced partial action on  $\mathcal{H}$ is Hausdorff.

From now on, $2^R$ denotes the equivalence relation associated to the enveloping action of the partial action $2^\theta$ of $G$ on $\mathcal{H}$ (see equation \eqref{eqgl}). That is $\Hh_G=(G\times \Hh)/2^R.$ 
\begin{lem}\label{l1}
	Let  $\theta$ be a partial action on $X$ and  $2^{\theta}$ be the corresponding partial action of $G$ on $\Hh$, then the map $\Theta: X_G\ni [g,x]\mapsto [g,\{x\}]\in \mathcal{H}_G$ is an embedding.
\end{lem}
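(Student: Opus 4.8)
The plan is to show that $\Theta\colon X_G\to\mathcal{H}_G$ is a well-defined continuous injection that is a homeomorphism onto its image. The map sends $[g,x]\in X_G$ to $[g,\{x\}]\in\mathcal{H}_G$, using the embedding $X\ni x\mapsto\{x\}\in\mathcal{H}$ from Lemma~\ref{proph}(i). I would organize the argument into four steps: well-definedness, injectivity, continuity, and continuity of the inverse on the image.

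First I would verify that $\Theta$ is well-defined. Suppose $[g,x]=[h,y]$ in $X_G$, which by \eqref{eqgl} means $x\in X_{g^{-1}h}$ and $\theta_{h^{-1}g}(x)=y$. I must check that $[g,\{x\}]=[h,\{y\}]$ in $\mathcal{H}_G$, i.e.\ that $\{x\}\in\langle X_{g^{-1}h}\rangle_{\mathcal{H}}$ and $2^{\theta_{h^{-1}g}}(\{x\})=\{y\}$. The first holds because $x\in X_{g^{-1}h}$ forces $\{x\}\subseteq X_{g^{-1}h}$, so $\{x\}\in\langle X_{g^{-1}h}\rangle_{\mathcal{H}}$; the second holds since $2^{\theta_{h^{-1}g}}(\{x\})=\theta_{h^{-1}g}(\{x\})=\{\theta_{h^{-1}g}(x)\}=\{y\}$. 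For injectivity I would run this computation in reverse: if $[g,\{x\}]=[h,\{y\}]$ then $\{x\}\subseteq X_{g^{-1}h}$ and $\theta_{h^{-1}g}(\{x\})=\{y\}$, which immediately give $x\in X_{g^{-1}h}$ and $\theta_{h^{-1}g}(x)=y$, hence $[g,x]=[h,y]$.

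For continuity, I would use the quotient maps. Write $q\colon G\times X\to X_G$ and $q'\colon G\times\mathcal{H}\to\mathcal{H}_G$ for the canonical projections, and let $\sigma\colon G\times X\to G\times\mathcal{H}$ be $(g,x)\mapsto(g,\{x\})$, which is continuous since $x\mapsto\{x\}$ is an embedding by Lemma~\ref{proph}(i). Then $\Theta\circ q=q'\circ\sigma$, and since $q$ is a quotient map and $q'\circ\sigma$ is continuous, $\Theta$ is continuous by the universal property of the quotient topology. For the inverse, I would show $\Theta$ is a homeomorphism onto $\Theta(X_G)$ by exhibiting it as an open map onto its image, exploiting that $q$ is open by \eqref{op}: given a basic open set, I would use that $\sigma$ maps the singleton slice of $G\times X$ homeomorphically onto $G\times\{\{x\}\colon x\in X\}$ inside $G\times\mathcal{H}$, and then push forward through the open map $q'$.

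The main obstacle I anticipate is the final step---showing $\Theta$ is a homeomorphism onto its image rather than merely a continuous bijection onto $\Theta(X_G)$. The difficulty is that $\Theta(X_G)$ carries the subspace topology from $\mathcal{H}_G$, and I must match this with the quotient topology on $X_G$; the orbit-equivalence classes in $\mathcal{H}_G$ that meet the singleton locus need to correspond exactly to $R$-classes in $G\times X$. The cleanest route is to produce a continuous retraction or to use the openness of $q$ and $q'$ to transfer open sets back and forth, checking that the image under $\sigma$ of an $R$-saturated open set is (the trace on the singleton locus of) a $2^R$-saturated open set. I expect this saturation-compatibility check to be where the real work lies, whereas well-definedness, injectivity, and forward continuity are routine.
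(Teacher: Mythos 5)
Your four-step outline is essentially the paper's proof. Well-definedness and injectivity are done by the same two computations with \eqref{eqgl}; continuity is obtained exactly as you propose, by factoring $\Theta\circ q=q_{\Hh}\circ\varphi$ with $\varphi\colon(g,x)\mapsto(g,\{x\})$ (continuous by Lemma \ref{proph}) and invoking the quotient property of $q$; and for the inverse the paper does precisely what you sketch: given an open $U\subseteq X_G$ and $[g_0,x_0]\in U$, it picks a box $(g_0,x_0)\in V\times W\subseteq q^{-1}(U)$, sets $Z=q_{\Hh}(V\times\langle W\rangle_{\Hh})\cap\mathrm{Im}(\Theta)$, and uses openness of $q_{\Hh}$ (the analogue of \eqref{op} for $2^\theta$) to conclude $Z$ is an open neighborhood of $[g_0,\{x_0\}]$ in the image.

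The one point where your proposal stops short is the step you flag as ``where the real work lies,'' the saturation compatibility; as written, that is the only gap, and it is in fact a one-line observation rather than real work. If $(v,F)\in V\times\langle W\rangle_{\Hh}$ and $q_{\Hh}(v,F)=[r,\{s\}]$, then by \eqref{eqgl} applied to $2^\theta$ we have $F\subseteq X_{v^{-1}r}$ and $2^{\theta_{r^{-1}v}}(F)=\theta_{r^{-1}v}(F)=\{s\}$; since $\theta_{r^{-1}v}$ is a bijection, $F$ must be a singleton $\{w\}$ with $w\in W$, and then $(v,w)\,R\,(r,s)$, so $\Theta^{-1}([r,\{s\}])=[v,w]=q(v,w)\in q(V\times W)\subseteq U$. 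In other words, the singleton locus is automatically $2^R$-saturated, because the maps $2^{\theta_g}$ act by taking direct images under the injective maps $\theta_g$: a $2^R$-class meeting the singletons lies entirely within them and projects bijectively onto an $R$-class. No retraction and no separate saturation lemma is needed, and the same line works uniformly for $\Hh_1$, $\Hh_2$, $\Hh_3$ since singletons belong to all three hyperspaces. With that observation added, your argument coincides with the paper's.
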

\begin{proof}
	First of all observe that  $\Theta$ is well defined. Indeed,  if $(g,x) R (h,y),$ then $\{x\}\subseteq \langle X_{g^{-1}h}\rangle_{\Hh}$ and $2^{\theta_{h^{-1}g}}(\{x\})=\{y\}$,  which gives  $(g,\{x\})2^R(h,\{y\})$. In an analogous way one checks that $\Theta$ is injective.
Now we prove that  $\Theta$ is continuous, for this it is enough to check that $\beta: G\times X  \ni (g,x)\mapsto [g,\{x\}]\in \mathcal{H}_G$, is continuous. For this notice that 
		$\varphi: G\times X\ni (g,x)\mapsto (g,\{x\})\in G\times \mathcal{H}$, is continuous because of Lemma \ref{proph}. Also, $\beta=\pi\circ\varphi$, where $q_{\Hh}: G\times\mathcal{H} \rightarrow \mathcal{H}_G$ is the quotient map, form this  $\beta$ is continuous, and so is  $\Theta.$ Now we need to show that $\Theta^{-1}: {\rm Im}(\Theta)\rightarrow X_G$ is continuous. 
 Let $U\subseteq X_G$ be an open set and $[g_0,\{x_0\}]\in {\rm Im}(\Theta)$  such that $[g_0,x_0]\in U$. Then  $(g_0,x_0)\in q^{-1}(U)$ and there exists open sets  $V\subseteq G$ y $W\subseteq X$ such that $(g_0,x_0)\in V\times W\subseteq q^{-1}(U)$. Take  $Z:= q_\Hh(V\times \langle W\rangle_{\Hh})\cap {\rm Im}(\Theta)$. Since $q_\Hh$ is open, then $Z$ is open in  ${\rm Im}(\Theta)$ and $[g_0,\{x_0\}]\in Z$. On the other hand, take  $[r,\{s\}]\in Z$ we check that $\Theta^{-1}([r,\{s\}])=[r,s]\in U$. For this take  $(v,F)\in V\times \langle W\rangle_{\Hh}$ such that  $[v,F]=q_\Hh(v,F)=[r,\{s\}]$. Then  $F=\{w\}$ for some $w\in W$ and \begin{center}
			$\Theta^{-1}([r,\{s\}])=\Theta^{-1}([v,\{w\}])=[v,w]=q(v,w)\in q(V\times W)\subseteq U$,
	\end{center}
 this shows that $\Theta^{-1}$ is continuous and  $\Theta$ is an embedding. 
\end{proof}

\begin{lem}\label{l2}
	Let  $\theta$ be a partial action on $X$ and  $2^{\theta}$ be the induced  partial action of $G$ on $\Hh$, then $2^R$ is closed in  $(G\times \mathcal{H})^{2}$ provided that $R$ is closed in $(G\times X)^{2}.$
\end{lem}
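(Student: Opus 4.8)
The plan is to reduce the closedness of $2^R$ to that of $R$ by first expressing the relation $2^R$ pointwise, and then to deduce that the complement of $2^R$ is open by a compactness (tube-lemma type) argument. Recall from \eqref{eqgl} that $(g,A)\,2^R\,(h,B)$ means $A\in\langle X_{g^{-1}h}\rangle_{\Hh}$ and $2^{\theta_{h^{-1}g}}(A)=B$, that is, $A\subseteq X_{g^{-1}h}$ and $\theta_{h^{-1}g}(A)=B$. First I would prove the characterization
\[
(g,A)\,2^R\,(h,B)\iff \big(\forall a\in A\ \exists b\in B:\ (g,a)R(h,b)\big)\ \text{and}\ \big(\forall b\in B\ \exists a\in A:\ (g,a)R(h,b)\big).
\]
Indeed, $a\in X_{g^{-1}h}$ is exactly the statement that $(g,a)R(h,\theta_{h^{-1}g}(a))$, so $A\subseteq X_{g^{-1}h}$ together with $\theta_{h^{-1}g}(A)=B$ translates precisely into the two pointwise conditions on the right. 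This Vietoris-type reformulation is the heart of the argument and the step I expect to be the most delicate to state cleanly.

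Next I would show that $(G\times\Hh)^2\setminus 2^R$ is open. Fix a pair $\big((g,A),(h,B)\big)\notin 2^R$. By the characterization, and up to interchanging the roles of the two coordinates, I may assume there is a point $a_0\in A$ with $\big((g,a_0),(h,b)\big)\notin R$ for every $b\in B$. Since $R$ is closed, its complement is open, so for each $b\in B$ I can choose basic open boxes $P_b\times Q_b\ni(g,a_0)$ and $S_b\times T_b\ni(h,b)$ in $G\times X$ with $(P_b\times Q_b)\times(S_b\times T_b)\subseteq (G\times X)^2\setminus R$. The family $\{T_b\}_{b\in B}$ covers the compact set $B$; extracting a finite subcover $T_{b_1},\dots,T_{b_n}$ and intersecting, I set $P=\bigcap_iP_{b_i}$, $Q=\bigcap_iQ_{b_i}$ and $S=\bigcap_iS_{b_i}$.

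Finally I would assemble the Vietoris neighborhood $P\times\langle Q,X\rangle_{\Hh}$ of $(g,A)$ (valid since $a_0\in A\cap Q$) and $S\times\langle T_{b_1},\dots,T_{b_n}\rangle_{\Hh}$ of $(h,B)$ (valid since the $T_{b_i}$ cover $B$ and each contains the point $b_i\in B$), and check that their product misses $2^R$. For any $(g',A')$ and $(h',B')$ in these neighborhoods, any $a'\in A'\cap Q$ satisfies $\big((g',a'),(h',b')\big)\notin R$ for every $b'\in B'$, because each such $b'$ lies in some $T_{b_i}$ and then $(g',a',h',b')$ sits inside the box chosen at $b_i$. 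By the characterization this forces $\big((g',A'),(h',B')\big)\notin 2^R$. The symmetric case, where some point of $B$ is unrelated to all of $A$, is handled by exchanging the roles of $A$ and $B$ and using compactness of $A$ instead. I would also remark that the argument is uniform in $\Hh\in\{\Hh_1,\Hh_2,\Hh_3\}$, since the basic Vietoris sets of $\Hh_2$ and $\Hh_3$ are the traces of those of $\Hh_1$. The main obstacle is precisely the pointwise reformulation together with the compactness step that upgrades the pointwise separation coming from $R$ into a single Vietoris neighborhood avoiding $2^R$.
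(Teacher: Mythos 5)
Your proposal is correct and follows essentially the same route as the paper: the paper's three cases ($A\not\subseteq X_{g^{-1}h}$; some $a\in A$ with $\theta_{h^{-1}g}(a)\notin B$; some $b\in B$ not in $\theta_{h^{-1}g}(A)$) amount exactly to your pointwise characterization's two symmetric alternatives, and your compactness step with the Vietoris neighborhoods $P\times\langle Q,X\rangle_{\Hh}$ and $S\times\langle T_{b_1},\dots,T_{b_n}\rangle_{\Hh}$ matches the paper's choice of $(U\times\langle X,V\rangle_{\Hh})\times(Y\times\langle Z_{b_1},\dots,Z_{b_n}\rangle_{\Hh})$ verbatim. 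Your up-front reformulation of $2^R$ merely packages the paper's case analysis more cleanly (the paper instead rules out $\theta_{s^{-1}r}(c)\in D$ by a small contradiction inside each case), so the two arguments are the same in substance.
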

\begin{proof}
	Take $((g,A),(h,B))\in (G\times \mathcal{H})^2\setminus 2^R$, we have two cases to consider:
	 
\noindent	{\bf Case 1:  $A\notin \langle X_{g^{-1}h}\rangle_{\Hh}$.} Then there exists  $a\in A \cap (X\setminus X_{g^{-1}h}),$ and  $((g,a),(h,b))\in (G\times X)^2\setminus R$, for any  $b\in B$. Since  $R$ is  closed  there are open sets  $U_b,Y_b\subseteq G$ and  $V_b,Z_b\subseteq X$ such that  
$$((g,a),(h,b))\in (U_b\times V_b)\times (Y_b\times Z_b)\subseteq (G\times X)^2\setminus R,$$ for any $b\in B.$
The fact that  $B$ is compact implies that there are $b_1,\cdots,b_n\in B$ for which $B\subseteq \bigcup\limits_{i=1}^n Z_{b_i}$.  Write $U:=\bigcap\limits_{i=1}^nU_{b_i}$, $V:=\bigcap\limits_{i=1}^nV_{b_i}$ and  $Y=\bigcap\limits_{i=1}^nY_{b_i}$.  Then $A\in\langle X,V\rangle_{\Hh}$ and $((g,A),(h,B))\in (U\times \langle X,V\rangle_{\Hh})\times(Y\times \langle Z_{b_1},\cdots,Z_{b_n}\rangle_{\Hh}). $ Now we show that \begin{equation*}\label{closed}
		 (U\times \langle X,V\rangle_{\Hh})\times(Y\times \langle Z_{b_1},\cdots,Z_{b_n}\rangle_{\Hh})\subseteq (G\times \mathcal{H})^2\setminus 2^R.
	\end{equation*}
	For this take $((r,C),(s,D))\in (U\times \langle X,V\rangle_{\Hh})\times(Y\times \langle Z_{b_1},\cdots,Z_{b_n}\rangle_{\Hh})$. For $c\in C\cap V$ and $d\in D$, there is  $1\leq j\leq n$ such that  $d\in Z_{b_j}$, then $((r,c),(s,d))\in (U_{b_j}\times V_{b_j})\times(Y_{b_j}\times Z_{b_j})\subseteq (G\times X)^2\setminus R$ which implies  $c\notin X_{r^{-1}s}$ or  $c\in X_{r^{-1}s}$ and  $\theta_{s^{-1}r}(c)\neq d$. If  $c\notin X_{r^{-1}s}$, then  $C\notin \langle X_{r^{-1}s}\rangle_{\Hh}$  and we have done.  Now suppose  $c\in X_{r^{-1}s}$ and $\theta_{s^{-1}r}(c)\neq d$. If $\theta_{s^{-1}r}(c)\in D$, by a similar argument as above we  get $((r,c),(s,\theta_{s^{-1}r}(c))\notin R$, which leads to a contradiction. Then, $\theta_{s^{-1}r}(C)\neq D$ and $((r,C),(s,D))\notin 2^R$.

\noindent	{\bf Case 2. }$A\subseteq X_{g^{-1}h}.$  Then $\theta_{h^{-1}g}(A)\neq B$. Suppose that there exists  $a\in A$ such that $\theta_{h^{-1}g}(a)\notin B$. Then  $((g,a),(h,b))\notin R$, for any $b\in B,$ we  argue as in Case 1 to obtain   $b_1,\cdots,b_n\in B$ and  families $\{U_{b_i}\}_{i=1}^n$, $\{Y_{b_i}\}_{i=1}^n$ of open subsets of  $G$ such that  $g\in U:=\bigcap\limits_{i=1}^nU_{b_i}$ and $h\in Y:=\bigcap\limits_{i=1}^nY_{b_i}$. Also there are families $\{V_{b_i}\}_{i=1}^n$ and $\{Z_{b_i}\}_{i=1}^n$ of open subsets of  $X$ such that  $a\in V:=\bigcap\limits_{i=1}V_{b_i}, B\in \langle Z_{b_1},\cdots, Z_{b_n}\rangle_{\Hh}$ and  $(U_{b_i}\times V_{b_i})\times(Y_{b_i}\times Z_{b_i})\subseteq (G\times X)^2\setminus R$, for any  $i=1,\cdots n$. As in Case 1 we get
	\begin{center}
		$((g,A),(h,B))\in (U\times \langle X,V\rangle_{\Hh})\times(Y\times \langle Z_{b_1},\cdots,Z_{b_n}\rangle_{\Hh})\subseteq (G\times \mathcal{H})^2\setminus 2^R.$
	\end{center}
	To finish the proof, suppose that there is  $b\in B$ such that  $\theta_{h^{-1}g}(a)\neq b$, for each  $a\in A$. If $a\in A,$ then  $((g,a),(h,b))\notin R$ and   there are open sets   $U_a,Y_a\subseteq G$  and  $V_a,Z_a\subseteq X$ such that $((g,a),(h,b))\in (U_a\times V_a)\times (Y_a\times Z_a)\subseteq (G\times X)^2\setminus R$. The compactness of  $A$  implies that there are  $a_1,\cdots,a_n\in A$ such that $A\subseteq \bigcup\limits_{i=1}^nV_{a_i}$. Write $U:=\bigcap\limits_{i=1}^nU_{a_i}$, $Y':=\bigcap\limits_{i=1}^nY_{a_i}$ and  $Z:=\bigcap\limits_{i=1}^nZ_{a_i}$. Now \begin{center}
		$((g,A),(h,B))\in (U\times\langle V_{a_1},\cdots,V_{a_n}\rangle_{\Hh})\times (Y'\times \langle X,Z\rangle_{\Hh})\subseteq (G\times \mathcal{H})^2\setminus 2^R.$
	\end{center}
Indeed,  let $((r,C),(s,D))\in (U\times\langle V_{a_1},\cdots,V_{a_n}\rangle_{\Hh})\times (Y'\times \langle X,Z\rangle_{\Hh})$ and  $d\in D\cap Z$. For $c\in C$,  there is  $1\leq j\leq n$ such that $c\in V_{a_j}$ therefore $((r,c),(s,d))\in (U_{a_j}\times V_{a_j})\times (Y_{a_j}\times Z_{a_j})\subseteq (G\times X)^2\setminus R$. Moreover, $((s,d),(r,c))\notin R$ and  $d\notin X_{s^{-1}r}$ or $d\in X_{s^{-1}r}$ and $\theta_{r^{-1}c}(d)\neq c$.  In the case $d\notin X_{s^{-1}r}$, we obtain  $D\notin\langle X_{s^{-1}r} \rangle_{\Hh}$ and  $((r,C),(s,D))\notin 2^R$. Thus it only remains to consider the case  $d\in X_{s^{-1}r}$ and  $\theta_{r^{-1}c}(d)\neq c$. If  $\theta_{r^{-1}s}(d)\in C$, as above  we get $((s,d),(r,\theta_{r^{-1}s}(d)))\notin R$, which leads to a contradiction. This shows   $\theta_{r^{-1}s}(d)\notin C$, and $((r,C),(s,D))\notin 2^R$.
\end{proof}
Combining \cite[Lemma 34]{PU} with Lemma \ref{l1},  Lemma \ref{l2} and using that the quotient map to the globalization is open we obtain the following. 
\begin{teo}\label{equivh}
	Let $\theta$  be a partial action of $G$ on $X$. Then $X_G$ is Hausdorff if and only if $\mathcal{H}_G$ is Hausdorff.
\end{teo}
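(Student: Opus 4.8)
The plan is to prove the biconditional by combining the two lemmas just established with a known criterion linking the Hausdorffness of an enveloping space to the closedness of the equivalence relation $R$. The key external ingredient is \cite[Proposition 1.2]{AB} (equivalently \cite[Lemma 34]{PU}), which states that for a partial action, the enveloping space is Hausdorff if and only if the associated equivalence relation is closed in the square of $G\times X$. So the strategy is to transfer the closedness of $R$ through the machinery $\theta\mapsto 2^\theta$ and back.

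For the forward direction, I would assume $X_G$ is Hausdorff. By \cite[Proposition 1.2]{AB} this is equivalent to $R$ being closed in $(G\times X)^2$. Then Lemma \ref{l2} applies directly and yields that $2^R$ is closed in $(G\times \mathcal H)^2$. Applying \cite[Proposition 1.2]{AB} once more, now to the partial action $2^\theta$ on $\mathcal H$, gives that $\mathcal H_G$ is Hausdorff. This direction is essentially immediate once Lemma \ref{l2} is in hand, since all the real work was done there.

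The converse is where the embedding $\Theta$ from Lemma \ref{l1} enters. Assume $\mathcal H_G$ is Hausdorff. Since $\Theta\colon X_G\to\mathcal H_G$ is an embedding (Lemma \ref{l1}), its image is a subspace of a Hausdorff space, hence Hausdorff, and therefore $X_G\cong \mathrm{Im}(\Theta)$ is Hausdorff as well. This uses only the hereditary nature of the Hausdorff property together with the fact that a space homeomorphic to a Hausdorff space is Hausdorff; no appeal to openness of the quotient map is even required for this half.

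I expect the main conceptual obstacle to be purely in verifying that the criterion of \cite[Proposition 1.2]{AB} is genuinely applicable to $2^\theta$, i.e.\ that $2^\theta$ satisfies the standing hypotheses (topological partial action on a topological space, with $2^R$ the correctly associated relation); this was arranged by Theorem \ref{teo2.8}, which guarantees $2^\theta$ is a topological (indeed continuous) partial action on the compact Hausdorff space $\mathcal H$, and by the standing convention that $\mathcal H_G=(G\times\mathcal H)/2^R$. The openness of the quotient maps $q$ and $q_{\mathcal H}$ (used in Lemma \ref{l1}) and the closedness transfer (Lemma \ref{l2}) together discharge all remaining technical points, so the proof itself reduces to citing the two lemmas and the separation criterion in the two directions described above.
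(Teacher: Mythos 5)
Your proposal is correct and follows essentially the same route as the paper, whose proof consists precisely of combining the open-quotient Hausdorffness criterion (\cite[Lemma 34]{PU}, equivalently \cite[Proposition 1.2]{AB}) with Lemma \ref{l2} for the forward direction and the embedding $\Theta$ of Lemma \ref{l1} for the converse. Your remark that openness of the quotient map is only needed when passing from closedness of $2^R$ to Hausdorffness of $\mathcal{H}_G$ (while the converse implication and the embedding argument need no such appeal) is an accurate reading of where \cite[Proposition 3.9]{KL} enters.
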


Recall that a locally  compact Cantor space is a  locally  compact Hausdorff space with a countable basis of clopen sets and no isolated points. 

We proceed with the next.
\begin{pro} Let $X$ be a  metric compact Cantor space, $G$ a countable discrete group and suppose that $\theta=(\theta_g, X_g)_{g\in G}$ is a partial action of $G$ on $X$ such that $X_g$ is clopen for all $g\in G.$ Then $(2^{X})_G$ is a locally compact Cantor space.
\end{pro}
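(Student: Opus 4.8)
The plan is to present $(2^{X})_{G}$ as a countable union of clopen, compact copies of the hyperspace $2^{X}$ and then to read off each of the four defining properties of a locally compact Cantor space from this decomposition. First I would collect the consequences of the hypotheses. Since $G$ is discrete and each $\theta_{g}$ is a homeomorphism, $\theta$ is continuous and $G*X=\bigcup_{g\in G}\{g\}\times X_{g\m}$ is open; hence Theorem \ref{teo2.8} gives that $2^{\theta}$ is a continuous partial action of $G$ on $\mathcal H=2^{X}$ with $G*\mathcal H$ open. Each domain is $\langle X_{g}\rangle_{\Hh}=\{A\in 2^X:A\subseteq X_{g}\}$, which is clopen in $2^{X}$: it is Vietoris-open, and its complement $\{A:A\cap(X\setminus X_{g})\neq\emptyset\}$ is open because $X\setminus X_{g}$ is open. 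I would also record that $2^{X}$ is itself a compact Cantor space: it is compact Hausdorff by Lemma \ref{proph}, second countable, zero-dimensional because the sets $\langle U_{1},\dots,U_{n}\rangle$ with the $U_{i}$ clopen in $X$ are clopen and form a basis, and perfect because $X$ is (the Vietoris hyperspace of a Cantor space is again a Cantor space).

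Second, I would establish that the globalization is Hausdorff. By Theorem \ref{equivh} it is enough to prove that $X_{G}$ is Hausdorff, and by \cite[Proposition 1.2]{AB} this amounts to showing that $R$ is closed in $(G\times X)^{2}$. As $G$ is discrete, $(G\times X)^{2}$ is the disjoint union of the clopen slices $\{g\}\times X\times\{h\}\times X$, and on each slice $R$ is the graph $\{(x,\theta_{h\m g}(x)):x\in X_{g\m h}\}$ of a continuous map defined on the closed set $X_{g\m h}$; since $X$ is Hausdorff such a graph is closed, so $R$ is closed and therefore $(2^{X})_{G}$ is Hausdorff.

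Finally I would exploit the decomposition $(2^{X})_{G}=\bigcup_{g\in G}g\cdot\iota(\mathcal H)$. Because $2^{\theta}$ is continuous, $\iota$ is an embedding, so $\iota(\mathcal H)\cong 2^{X}$; it is open since $G*\mathcal H$ is open, and compact as the continuous image of the compact space $2^{X}$, hence closed in the Hausdorff space $(2^{X})_{G}$. Thus each $g\cdot\iota(\mathcal H)=\mu_{g}(\iota(\mathcal H))$ is a clopen, compact copy of $2^{X}$. From this, every point has the compact open neighborhood $g\cdot\iota(\mathcal H)$, so $(2^{X})_{G}$ is locally compact; it is covered by countably many open, second countable pieces, hence second countable; inside each clopen copy the clopen basis of $2^{X}$ consists of sets that are clopen in $(2^{X})_{G}$, and the (countably many) such sets form a clopen basis; and since each copy is open and perfect, $(2^{X})_{G}$ has no isolated points. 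Collecting these facts shows that $(2^{X})_{G}$ is a locally compact Cantor space.

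The step I expect to be the main obstacle is the one that makes the decomposition usable, namely that $\iota(\mathcal H)$ is \emph{closed} in $(2^{X})_{G}$: this rests on the compactness of $2^{X}$ together with the Hausdorffness of the globalization, so the genuine work is concentrated in the separation argument (closedness of $R$) and its transfer through Theorem \ref{equivh}. Everything else is a matter of transporting the zero-dimensional and perfect structure of the single Cantor hyperspace $2^{X}$ across the countably many clopen copies.
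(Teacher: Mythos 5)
Your proof is correct, and it takes a genuinely different route from the paper's: the paper's proof is essentially three citations. It observes that $2^X$ is compact Hausdorff, invokes \cite[Proposition 8.4]{NW} (using metrizability of $X$) to get that $\Hh_1=2^X$ is a Cantor space, notes that each $\langle X_g\rangle$ is clopen, and then outsources the entire enveloping-space analysis to \cite[Proposition 3.3]{EGG}, which is precisely the statement that a partial action of a countable discrete group with clopen domains on a (locally compact) Cantor space has a locally compact Cantor enveloping space. You share the first two observations (your direct verification that $2^X$ is a compact, second countable, zero-dimensional, perfect space, and that $\langle X_g\rangle_{\Hh}$ is clopen, is sound), but you then re-prove the EGG step by hand: discreteness of $G$ plus clopen-ness of the domains gives closedness of $R$ slice by slice via the closed-graph argument (this is exactly where clopen, rather than merely open, domains are indispensable, since the graph of $\theta_{h\m g}$ over a non-closed $X_{g\m h}$ need not be closed), hence a Hausdorff globalization; the orbit decomposition $(2^X)_G=\bigcup_{g\in G} g\cdot\iota(\Hh)$ into countably many compact clopen copies of $2^X$ then yields local compactness, second countability, a countable clopen basis, and perfectness, with countability of $G$ entering exactly at the second-countability step. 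What each approach buys: the paper's version is short and inherits the generality of \cite{EGG}; yours makes the mechanism explicit, shows where each hypothesis is used, and stays within the paper's own toolkit. One streamlining remark: your detour through Theorem \ref{equivh} (and hence the rather laborious Lemma \ref{l2}) is avoidable here --- since the domains $\langle X_g\rangle_{\Hh}$ of $2^\theta$ are themselves clopen in $2^X$, your slice-wise closed-graph argument applies verbatim one level up to show $2^R$ is closed directly, giving Hausdorffness of $(2^X)_G$ without passing through $X_G$ at all.
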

\begin{proof} Since $X$ is a compact Hausdorff space, then $2^X$ is a compact Hausdorff space. Moreover since $X$ is metric, we get from  \cite[Proposition 8.4]{NW} that  $\Hh_1=2^X$ is a  Cantor space. Also $\langle X_g \rangle$ is clopen for all $g\in G$ and the result follows from \cite[Proposition 3.3]{EGG}.
\end{proof}

Now we shall work with the hyperspace $\Hh_3=F(X)$ consisting of finite subsets of $X.$ The following result shows that the enveloping space $(\Hh_3)_G$ is $T_1,$ provided that $X_G$ is.
\begin{pro}\label{fin}
	Let  $\theta$ be a topological partial action of  $G$ on $X$ and  $2^{\theta}$ be the induced partial action of $G$ on $\Hh_3$. If $X_G$ is $T_1,$ then  $(\Hh_3)_G$ is $T_1$.
\end{pro}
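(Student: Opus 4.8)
The plan is to use the standard fact that a quotient space is $T_1$ precisely when every fibre of the quotient map is closed. Writing $q_{\Hh_3}\colon G\times\Hh_3\to(\Hh_3)_G$ for the quotient map, the point $[g_0,A_0]$ is closed in $(\Hh_3)_G$ if and only if its preimage, the $2^R$-class
$$q_{\Hh_3}^{-1}([g_0,A_0])=\{(h,B)\in G\times\Hh_3\mid A_0\subseteq X_{g_0^{-1}h}\text{ and }\theta_{h^{-1}g_0}(A_0)=B\},$$
is closed in $G\times\Hh_3$. Applying the same remark to $q\colon G\times X\to X_G$ from \eqref{op}, the hypothesis that $X_G$ is $T_1$ gives that for each $x\in X$ the $R$-class $C_x:=q^{-1}([g_0,x])=\{(h,y)\mid x\in X_{g_0^{-1}h},\ \theta_{h^{-1}g_0}(x)=y\}$ (cf. \eqref{eqgl}) is closed in $G\times X$. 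So I would fix $(g_0,A_0)$ and reduce the whole statement to showing that the displayed $2^R$-class is closed.

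Next I would exploit finiteness. Write $A_0=\{x_1,\dots,x_n\}$ with the $x_i$ pairwise distinct; since every $\theta_g$ is a bijection, each $B$ occurring in the class equals $\theta_{h^{-1}g_0}(A_0)$ and so has exactly $n$ elements, which lets me work inside the compact space $X^n$. Consider the continuous set-forming map
$$\psi\colon X^n\to\Hh_1,\qquad (y_1,\dots,y_n)\mapsto\{y_1,\dots,y_n\},$$
which is continuous because $y\mapsto\{y\}$ is continuous and finite unions are continuous by Lemma \ref{proph}. Its image $\Hh_{\le n}:=\psi(X^n)=\{A\in\Hh_1\mid |A|\le n\}$ is compact, hence closed in the Hausdorff space $\Hh_1$, and is contained in $\Hh_3$, so $G\times\Hh_{\le n}$ is closed in $G\times\Hh_3$. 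Let $\pi_i\colon G\times X^n\to G\times X$, $(h,y_1,\dots,y_n)\mapsto(h,y_i)$, and set $\widetilde C:=\bigcap_{i=1}^n\pi_i^{-1}(C_{x_i})$, a closed subset of $G\times X^n$ since each $C_{x_i}$ is closed. Using that $C_{x_i}$ is the graph of $h\mapsto\theta_{h^{-1}g_0}(x_i)$ over its domain, a direct check shows that $\Phi:=\mathrm{id}_G\times\psi$ carries $\widetilde C$ exactly onto the $2^R$-class: a tuple in $\widetilde C$ forces $A_0\subseteq X_{g_0^{-1}h}$ and $\{y_1,\dots,y_n\}=\theta_{h^{-1}g_0}(A_0)$, and conversely every class element is attained by setting $y_i=\theta_{h^{-1}g_0}(x_i)$.

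It then remains to see that $\Phi(\widetilde C)$ is closed, and this is the one real obstacle: $G$ need not be compact, so continuity of $\Phi$ alone is insufficient. However $\psi\colon X^n\to\Hh_{\le n}$ is proper (a continuous map from a compact space to a Hausdorff space, hence closed with compact fibres), and properness is preserved under taking the product with the identity of an arbitrary space; therefore $\Phi=\mathrm{id}_G\times\psi\colon G\times X^n\to G\times\Hh_{\le n}$ is a closed map. Consequently $\Phi(\widetilde C)$ is closed in $G\times\Hh_{\le n}$, and since $G\times\Hh_{\le n}$ is closed in $G\times\Hh_3$ it is closed in $G\times\Hh_3$ as well. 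This shows the fibre $q_{\Hh_3}^{-1}([g_0,A_0])$ is closed; as $(g_0,A_0)$ was arbitrary, every point of $(\Hh_3)_G$ is closed and $(\Hh_3)_G$ is $T_1$.

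The decisive use of the hypothesis $\Hh=\Hh_3$ (finite sets) is exactly the reduction to the compact space $X^n$, which is what makes $\psi$ proper; the same argument would not go through for general compact members of $\Hh_1$, where no such finite-dimensional compact model of a class is available.
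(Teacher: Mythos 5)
Your proposal is correct, but it takes a genuinely different route from the paper's proof. The common starting point is the same: since $q_{\Hh_3}$ is a quotient map, $(\Hh_3)_G$ is $T_1$ exactly when each class $q_{\Hh_3}^{-1}([g_0,A_0])$ is closed in $G\times \Hh_3$, and the hypothesis on $X_G$ enters only through the closedness of the singleton fibres $C_{x_i}=q^{-1}([g_0,x_i])$; your descriptions of both fibres agree with \eqref{eqgl}. From there the paper argues negatively and pointwise: for $(h,F)$ outside the fibre it distinguishes the cases $\nexists\,(g\m h)\cdot F$ and $\exists\,(g\m h)\cdot F\neq A$ (the latter subdivided according to whether some $f\in F$ lands outside $A$ or some $a\in A$ is not attained), and in each case it assembles, from finitely many open rectangles avoiding the closed sets $q^{-1}([g,a_i])$, a basic box of the form $U\times\langle X,V\rangle_{\Hh_3}$ or $U\times\langle V\rangle_{\Hh_3}$ around $(h,F)$ disjoint from the fibre --- the same neighborhood bookkeeping as in Lemma \ref{l2}. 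You instead describe the fibre positively, as the image $\Phi(\widetilde C)$ of the closed set $\widetilde C=\bigcap_{i=1}^{n}\pi_i^{-1}(C_{x_i})\subseteq G\times X^n$ under $\Phi=\mathrm{id}_G\times\psi$, and deduce closedness from properness of the set-forming map $\psi\colon X^n\to \Hh_{\le n}$ (continuous from a compact space to a Hausdorff one, hence closed with compact fibres, hence universally closed in Bourbaki's sense). Every step checks out: $\psi$ is Vietoris-continuous, $\Hh_{\le n}$ is compact and therefore closed in $\Hh_3$, and the identity $\Phi(\widetilde C)=q_{\Hh_3}^{-1}([g_0,A_0])$ holds because each $y_i$ is forced to equal $\theta_{h\m g_0}(x_i)$, with injectivity of $\theta_{h\m g_0}$ keeping everything inside $\Hh_{\le n}$. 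The trade-off: the paper's proof is elementary and self-contained, stylistically uniform with its other closedness arguments (Lemma \ref{l2}, Proposition \ref{cer}), but needs delicate case analysis; yours eliminates the cases entirely and isolates exactly where $T_1$ (closedness of the $C_{x_i}$) and finiteness (the compact parameterizing space $X^n$, as your closing remark correctly observes) are used, at the price of invoking properness machinery. If you want to avoid that citation, the closedness of $\mathrm{id}_G\times\psi$ admits a short self-contained tube-lemma argument: given $(h,B)\notin\Phi(\widetilde C)$, the compact set $\{h\}\times\psi^{-1}(B)$ misses the closed set $\widetilde C$, so it sits in a tube $U\times W$ disjoint from $\widetilde C$, and then $U\times\bigl(\Hh_{\le n}\setminus\psi(X^n\setminus W)\bigr)$ is a neighborhood of $(h,B)$ disjoint from $\Phi(\widetilde C)$, using that $\psi$ is a closed map.
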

\begin{proof}
	Let  $A=\{a_1,\cdots,a_n\}\in \Hh_3$ and  $g\in G$,  and  $q_{\Hh_3}: G\times \Hh_3\rightarrow (\Hh_3)_G$  be the corresponding quotient map. We need to show that  \begin{center}
		$\pi^{-1}([g,A])=\{(h,F)\in G\times \Hh_3: \exists (g^{-1}h)\cdot F\ {\rm and}\ (g^{-1}h)\cdot F=A\}$
	\end{center} is closed in $ G\times \Hh_3.$ Take  $(h,F)\notin \pi^{-1}([g,A])$. There are two cases to consider. 

{\bf Case 1: }$\nexists (g^{-1}h)\cdot F$. Then there is $f\in F$ such that  $\nexists(g^{-1}h)\cdot f$. Since  $X_G$ is $T_1$,  for  $1\leq i\leq n$ there are open sets  $U_i\subseteq G$ y $V_i\subseteq X$ for which  \begin{center}
			$(h,f)\in U_i\times V_i\subseteq (G\times X)\setminus q^{-1}([g,a_i])$.
		\end{center}
		Take  $U:=\bigcap_{i=1}^{n} U_i$ y $V=\bigcap_{i=1}^{n} V_i$. Note that  $(h,F)\in U\times \langle X,V\rangle_{\Hh_3}\subseteq (G\times \Hh_3)\setminus q_{\Hh_3}^{-1}:([g,A])$. Indeed, if  $(t,B)\in U\times \langle X,V\rangle_{\Hh_3}$ and  $b\in B\cap V$ we have  $(t,b)\notin q^{-1}([g,a_i])$ for any  $1\leq i\leq n$. If $\nexists g^{-1}t\cdot b$, then  $\nexists (g^{-1}t)\cdot B$ and  $(t,B)\notin  q_{\Hh_3}^{-1}([g,A])$. On the other hand, if $\exists (g^{-1}t)\cdot b$, then $(g^{-1}t)\cdot b\neq a_i$, for each $1\leq i\leq n$, then  $(g^{-1}t)\cdot b\notin A$ and $(t,B)\notin  q_{\Hh_3}^{-1}([g,A])$.

{\bf Case 2: }$\exists (g^{-1}h)\cdot F$ and $(g^{-1}h)\cdot F\neq A$. If  there is   $f\in F$ for which  $(g^{-1}h)\cdot f\notin A$ we get  $(h,f)\notin q^{-1}([g,a_i])$ for  $1\leq i\leq n$ and we proceed as in Case 1. If there is  $a\in A$ such that $(g^{-1}h)\cdot f\neq a$, for any  $f\in F$ write $F=\{f_1,\cdots,f_k\}$,  then $(h,f_j)\notin q^{-1}([g,a])$ for each  $1\leq j\leq k$. Hence there are open sets  $U\subseteq G$ and  $V\subseteq X$ such that  $(h,f_j)\in U\times V\subseteq (G\times X)\setminus q^{-1}([g,a])$, for every $1\leq j\leq k$. Note that  $(h,F)\in U\times \langle V\rangle_{\Hh_3}\subseteq (G\times F(X))\setminus  q_{\Hh_3}^{-1}([g,A])$. Indeed, if  $(t,B)\in U\times \langle V\rangle_{\Hh_3}$. If $\nexists (g^{-1}t)\cdot B$, then  $(t,B)\notin  q_{\Hh_3}^{-1}([g,A])$. In the case  $\exists g^{-1}t\cdot B$, we get that for any $b\in B$ the pair $(t,b)$ belongs to $ U\times V$ and thus  $(g^{-1}t)\cdot b\neq a$ which gives $(t,B)\notin  q_{\Hh_3}^{-1}([g,A]),$ as desired.
\end{proof}
Combining  Lemma \ref{l1} and Proposition \ref{fin} we get.
\begin{coro} Let  $\theta$ be a topological partial action of  $G$ on $X$ and  $2^{\theta}$ be the induced partial action of $G$ on $\Hh_3$. Then  $(\Hh_3)_G$ is $T_1$ if and only if $X_G$ is $T_1.$
\end{coro}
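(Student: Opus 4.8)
The plan is to prove the two implications separately, observing that the real work has already been done in Proposition \ref{fin} and Lemma \ref{l1}, so that the corollary reduces to a short formal argument combining them.

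For the implication ``$X_G$ is $T_1$ $\Rightarrow$ $(\Hh_3)_G$ is $T_1$'' there is nothing new to prove: this is exactly the content of Proposition \ref{fin}, and I would simply invoke it. For the converse, I would assume that $(\Hh_3)_G$ is $T_1$ and appeal to Lemma \ref{l1}, which provides the embedding $\Theta\colon X_G\ni [g,x]\mapsto [g,\{x\}]\in (\Hh_3)_G$ (note that each singleton $\{x\}$ is finite, so the image indeed lies in $(\Hh_3)_G$). Since $\Theta$ is a homeomorphism of $X_G$ onto the subspace $\mathrm{Im}(\Theta)$ of $(\Hh_3)_G$, and since the $T_1$ axiom is hereditary, the subspace $\mathrm{Im}(\Theta)$ inherits the $T_1$ property from $(\Hh_3)_G$. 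As $T_1$-ness is a homeomorphism invariant, it follows that $X_G$ is $T_1$, completing the equivalence.

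There is no genuine obstacle here, and the only point requiring a moment's care is the direction in which Lemma \ref{l1} embeds the two globalizations: it embeds $X_G$ \emph{into} $(\Hh_3)_G$, which is precisely what lets me transport the $T_1$ separation property backwards, from the hyperspace globalization to the globalization of $X$. Because $T_1$ is simultaneously hereditary and invariant under homeomorphisms, no further topological input (such as openness of the quotient map, which was already used inside Lemma \ref{l1}) is needed at this stage; the previously established correspondence carries the entire argument.
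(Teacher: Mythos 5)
Your proposal is correct and is exactly the paper's argument: the authors obtain the corollary by ``combining Lemma \ref{l1} and Proposition \ref{fin}'', i.e.\ Proposition \ref{fin} gives the forward implication and the embedding $\Theta$ of Lemma \ref{l1} transports $T_1$ back from $(\Hh_3)_G$ to $X_G$, since $T_1$ is hereditary and a homeomorphism invariant. Your remark that singletons are finite, so $\Theta$ indeed lands in $(\Hh_3)_G$, is the only detail the paper leaves implicit.
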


We proceed with the next

\begin{coro}\label{separ2}
	Let $G$  be a separable group and $\theta$  be a continuous  partial action of $G$ on  $X$ such that $G*X$ is open and $X$  is separable. Take $\Hh\in \{\Hh_1,\Hh_3\}$ then the following assertions hold.
	\begin{enumerate}
		\item [(i)] $X_G$ is separable.
		\item [(ii)] $\Hh_G$ is separable, where  
		\item [(iii)] If  $X_G$ is $T_1$,  then  $\Hh({X_G})$ and   $\Hh({F(X)_G})$ are separable.
\end{enumerate}
\end{coro}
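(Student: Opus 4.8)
The plan is to reduce all three items to Lemma~\ref{separ}(i), applied to the appropriate enveloping action \eqref{action}, together with one auxiliary observation: \emph{for $\Hh\in\{\Hh_1,\Hh_3\}$ the Vietoris hyperspace $\Hh(Y)$ of a separable space $Y$ is again separable}. Indeed, if $D$ is a countable dense subset of $Y$, then the countable family of finite subsets of $D$ is dense in $\Hh(Y)$: any nonempty basic open set $\langle U_1,\dots,U_n\rangle_{\Hh(Y)}$ contains some member, so each $U_i$ is nonempty and meets $D$; choosing $d_i\in U_i\cap D$ produces a finite set $\{d_1,\dots,d_n\}$ which lies in $\bigcup_{i=1}^n U_i$ and meets every $U_i$, hence belongs to $\langle U_1,\dots,U_n\rangle_{\Hh(Y)}$. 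This argument requires finite sets to be admissible members of $\Hh$, which holds for $\Hh_1$ (finite sets are compact) and for $\Hh_3$ (finite sets are finite), but fails for $\Hh_2$, since a finite subset of more than one point is disconnected; this is exactly why $\Hh_2$ is excluded from the statement.

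For (i), since $\theta$ is continuous the canonical map $\iota\colon X\to X_G$ is an embedding by \cite[Proposition 3.12]{KL} (see Subsection~\ref{globa}), so $\iota(X)\cong X$ is separable, while $G$ is separable by hypothesis and $G\cdot\iota(X)=X_G$. Applying Lemma~\ref{separ}(i) to the enveloping action $\mu$ on $X_G$ shows that $X_G$ is separable. For (ii), Theorem~\ref{teo2.8} guarantees that $2^\theta$ is a continuous partial action of $G$ on $\Hh$ with $G*\Hh$ open, so the corresponding map $\iota_{\Hh}\colon\Hh\to\Hh_G$ is again an embedding and $G\cdot\iota_{\Hh}(\Hh)=\Hh_G$. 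By the auxiliary observation (taking finite subsets of a countable dense subset of $X$) the space $\Hh$ is separable, hence so is $\iota_{\Hh}(\Hh)$; a second application of Lemma~\ref{separ}(i), now to the enveloping action of $2^\theta$ on $\Hh_G$, yields that $\Hh_G$ is separable.

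For (iii), note that $X_G$ is separable by (i), and that $F(X)_G=(\Hh_3)_G$ is separable by (ii) applied to $\Hh=\Hh_3$. The auxiliary observation, applied to $Y=X_G$ and to $Y=F(X)_G$, then immediately gives that $\Hh(X_G)$ and $\Hh(F(X)_G)$ are separable. The $T_1$ hypothesis on $X_G$ serves to keep these hyperspaces legitimate objects of the kind considered here: by Proposition~\ref{fin} it passes to $F(X)_G=(\Hh_3)_G$, so that $F(X)_G$ is itself $T_1$ and $\Hh(F(X)_G)$ is well behaved. The main obstacle in the whole argument is precisely the auxiliary hyperspace-separability claim and its dependence on $\Hh$: one must verify that the proposed countable dense family actually lies in $\Hh$, which forces the restriction to $\Hh_1$ and $\Hh_3$ and rules out $\Hh_2$. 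Once this is settled, both enveloping-space steps are routine invocations of Lemma~\ref{separ}(i).
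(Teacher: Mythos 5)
Your proposal is correct and takes essentially the same route as the paper: items (i) and (ii) are exactly the paper's applications of Lemma \ref{separ}(i) to the enveloping actions of $\theta$ and $2^\theta$ (via the embedding \eqref{iota} being open onto its image, and items (i)--(ii) of Theorem \ref{teo2.8}), and item (iii) combines (i), (ii) and Proposition \ref{fin} just as in the text. The only difference is expository: you spell out the folklore fact that finite subsets of a countable dense set are Vietoris-dense in $\Hh_1$ and $\Hh_3$ (which the paper asserts without proof, citing separability and $T_1$), and you correctly pinpoint that this density argument is what excludes $\Hh_2$ from the statement.
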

\begin{proof}
	 (i) Since $\theta$ is continuous with open domain then $\iota(X)$ is open in $X_G$ and  \eqref{iota} is a homeomorphism onto $\iota(X),$ in particular $\iota(X)$ is separable, moreover the map $\mu$ given in \eqref{action} acts continuously in $X_G$ and  $G\cdot \iota(X)=X_G$ thus the result follows by (i) in Lemma \eqref{separ}.

 (ii) Since $X$ is separable and  $T_1$, then  $\Hh$ is separable. Then by  $(i)$ and $(ii)$ of Theorem \ref{teo2.8} and  Lemma \eqref{separ} we get that  $\Hh_G$ is separable.
	
(iii)	By $(i)$ the space $X_G$ is separable and follows that $\Hh({X_G})$ is separable. Finally, by   Proposition \ref{fin}  we have that  $F(X)_G$ is $T_1$, moreover $F(X)_G$ is separable thanks to (ii), and thus  $\Hh({F(X)_G})$ is separable.
\end{proof}

\begin{exe}\cite[Example 4.8]{PU} Consider the partial action of $\mathbb{Z}$ on $X=[0,1]$  given by $\theta_0={\rm id}_X$  and $\theta_n={\rm id}_{[0,1)}, n\neq 0$ then  $\theta$ is continuous with open domain and $X_\mathbb{Z}$ is  $T_1.$ Thus by Corollary \ref{separ2}  the spaces $ X_\mathbb{Z}, \Hh_\mathbb{Z}, \Hh({[0,1]_\mathbb{Z}})$ and   $\Hh({F([0,1])_G})$ are separable, where $\Hh\in \{\Hh_1,\Hh_3\}.$
\end{exe}
Now we shall deal with the regularity condition. 
\begin{teo}\label{regu}
	Let  $\theta: G*X\rightarrow X$ be a continuous partial action with closed domain. Then the spaces $X_G$ and $\Hh_G$ are regular.
\end{teo}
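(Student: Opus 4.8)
The plan is to derive both regularity statements from Lemma \ref{separ}(ii), applied to the enveloping actions of $\theta$ on $X_G$ and of $2^\theta$ on $\Hh_G$. In each case the enveloping action is a continuous global action (by \cite[Theorem 1.1]{AB}, using that $\theta$, and hence $2^\theta$ by Theorem \ref{teo2.8}(ii), is continuous) whose orbit of the canonical copy of the base space exhausts the enveloping space. Thus it suffices to check, in each case, that this canonical copy is a \emph{closed and regular} subspace, after which Lemma \ref{separ}(ii) finishes the argument.

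First I would treat $X_G$. Since $X$ is compact Hausdorff it is regular, and because $\theta$ is continuous the map $\iota$ of \eqref{iota} is a homeomorphism onto $\iota(X)$, so $\iota(X)$ is regular. To see that $\iota(X)$ is closed I would compute its preimage under the quotient map $q$ of \eqref{op}: using the relation \eqref{eqgl} with $h=1$, one has $[g,x]\in\iota(X)$ precisely when $x\in X_{g^{-1}}$, so that
$$q^{-1}(\iota(X))=\{(g,x)\in G\times X : x\in X_{g^{-1}}\}=G*X.$$
As $G*X$ is closed by hypothesis and $q$ is a quotient map, $\iota(X)$ is closed in $X_G$. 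Since moreover $G\cdot\iota(X)=X_G$, Lemma \ref{separ}(ii) yields that $X_G$ is regular.

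For $\Hh_G$ I would run the identical argument for the induced partial action $2^\theta$. By Theorem \ref{teo2.8}(ii), $2^\theta$ is continuous, so the canonical map $\iota_\Hh\colon \Hh\ni A\mapsto [1,A]\in\Hh_G$ is a homeomorphism onto $\iota_\Hh(\Hh)$; and since $\Hh$ is compact Hausdorff by Lemma \ref{proph}(ii), hence regular, $\iota_\Hh(\Hh)$ is regular. The same computation, now with the relation $2^R$ and the quotient map $q_\Hh$, gives $q_\Hh^{-1}(\iota_\Hh(\Hh))=G*\Hh$, which is closed by Proposition \ref{cer}; hence $\iota_\Hh(\Hh)$ is closed in $\Hh_G$. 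As $G\cdot\iota_\Hh(\Hh)=\Hh_G$, a second application of Lemma \ref{separ}(ii) shows that $\Hh_G$ is regular.

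The only genuinely substantive step is the identification $q^{-1}(\iota(X))=G*X$ together with its hyperspace analogue $q_\Hh^{-1}(\iota_\Hh(\Hh))=G*\Hh$; this is precisely where the hypothesis that the domain is closed enters, converting closedness of the domain into closedness of the embedded base via the quotient topology. Everything else is bookkeeping, relying on the standing facts that compact Hausdorff spaces are regular and that continuity of the (induced) partial action makes the base embed as a subspace, so that regularity is inherited. Note that, in contrast with the separability results of Corollary \ref{separ2}, openness of the domain is not required here.
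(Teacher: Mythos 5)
Your proposal is correct and takes essentially the same route as the paper's proof: both show that $\iota(X)$ (respectively $\iota_\Hh(\Hh)$) is regular, being a homeomorphic copy of a compact Hausdorff space, and closed, since its preimage under the quotient map is $G*X$ (respectively $G*\Hh$, closed by Proposition \ref{cer}), and then conclude by applying Lemma \ref{separ}(ii) to the enveloping actions. Your explicit computation of $q^{-1}(\iota(X))=G*X$ from \eqref{eqgl} with $h=1$ merely spells out a step the paper asserts without detail.
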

\begin{proof} Let $\iota$ be the embedding map defined in \eqref{iota} then $G\iota(X)=X_G,$ we shall prove that $\iota(X)$ is closed and regular. Let $q: G\times X\to X_G$ the quotient map, then $q\m(\iota(X))=G*X$ is closed in $G\times X$ which shows that $\iota(X)$ is closed in $X_G.$ Now since $X$ is a compact Hausdorff space we have that  $\iota(X)$ is regular  and thus  $X_G$ is regular  thanks to item (ii) of Lemma \ref{separ}. On the other hand,  we have that  $\Hh$ es compact and Hausdorff, $2^{\theta}$ is continuous ((ii) of Theorem {2.8}) and  $G*\Hh$ is closed thanks to Proposition \ref{cer}, then it is enough to apply (ii) of Lemma \ref{separ}.  
\end{proof}

\begin{rem} In \cite[Theorem 4.6]{PU} are presented other conditions for the space $X_G$ being regular.
\end{rem}
\section{On the category  \textbf{$G\curvearrowright$ CH} }\label{cate}
We shall use some of the above results to construct a monad in the category of partial actions on compact Hausdorff spaces. First recall the next.

\begin{defi}\label{mor}
	Let  $\phi=(\phi_g, X_g)_{g\in G}$ and  $\psi=(\psi_g, Y_g)_{g\in G},$  be partial actions of $G$ on the spaces $X$ and $Y$, respectively. A $G$-map $f:\phi \to \psi$ is a  continuous function   $f:X\to Y$ such that:
	\begin{enumerate}
		\item [(i)] $f(X_g)\subseteq Y_g,$
		\item [(ii)] $f(\phi_g(x))=\psi_g(f(x)),$ for each $x\in X_{g^{-1}},$
	\end{enumerate}
	for any $g\in G.$ 	 If moreover $f$ is a homeomorphism and $f^{-1}$ is $G$-map, we say that $\phi$ are  $\psi$  equivalent. 
\end{defi}
 We denote by  \textbf{$G\curvearrowright$ Top} the category whose objects are topological  partial actions of $G$ on topological spaces and morphisms are $G$-maps defined as above.  Also, we denote by \textbf{$G\curvearrowright$ CH}  the subcategory of \textbf{$G\curvearrowright$ Top} whose objects are topological partial actions of $G$ on compact Hausdorff spaces. It follows by  Theorem \ref{teo2.8} that there is a functor $2^-: G\curvearrowright \mathbf{CH}\rightarrow G\curvearrowright \mathbf{CH}$.

\subsection{The  monad $\mathbb{I}$}
Recall the next.
\begin{defi}
	Let $\mathcal{C}$  be a category. A monad in $\mathcal{C}$ is a triple $(T,\eta,\mu)$, where  $T:\mathcal{C}\to\mathcal{C}$ is an endofunctor,  $\eta: Id_{\mathcal{C}}\Longrightarrow T$ and $\mu: T^2\Longrightarrow T$ are natural transformations  such that: \begin{equation}\label{mon}
		\mu\circ T\eta=\mu\circ \eta T=1_T\,\,\,\text{ and }\,\,\,\mu\circ\mu T=\mu\circ T\mu.
	\end{equation}
	
\end{defi}

Given an  object $\alpha=(\alpha_g, X_g)_{g\in G}\in G\curvearrowright \mathbf{CH}.$ We have by Lemma \ref{proph} that the map  $\eta_{\alpha}: X\ni x\mapsto \{x\}\in 2^X$,  is a continuous function. From this it is not difficult to see that  $\eta_{\alpha}: \alpha\to 2^\alpha$ is a a morphism in $G\curvearrowright \mathbf{CH}$. Moreover,  for an object  $\beta=(Y_g , \beta_g)_{g\in G}$ in   $G\curvearrowright \mathbf{CH}$ and a morphism $f:\alpha\to \beta $ the diagram:
	\begin{center}
		$\xymatrix{X \ar[d]_-{f} \ar[r]^-{\eta_{\alpha}}& 2^X\ar[d]^-{2^{f}}\\
			Y\ar[r]_-{\eta_{\beta}}& 2^Y
		}$
		\end{center}
is commutative.  Thus the family $\eta=\{\eta_{\alpha}\}_{\alpha\in G\curvearrowright \mathbf{CMet}}:Id_{G\curvearrowright \mathbf{CMet}}\Longrightarrow 2^-$ is a natural transformation. Now set  $\mu_{\alpha}: 2^{2^X}\ni A\mapsto \cup A \in 2^X$, by Lemma \ref{proph} $\mu_{\alpha}$  is continuous. We shall check that  $\mu_{\alpha}:2^{2^{\alpha}}\to 2^{\alpha}$ is a morphism in  $G\curvearrowright \mathbf{CH}$ . 
	\begin{enumerate}
		\item [(i)] Take $g\in G$ and  $A\in \langle\langle X_g\rangle\rangle$. Then $A\subseteq \langle X_g\rangle$ and  $\mu_{\alpha}(A)=\cup A\subseteq X_g$, that is  $\mu_{\alpha}(A)\in \langle X_g\rangle$.
		\item [(ii)] For  $A\in \langle\langle X_{g^{-1}}\rangle\rangle$ we have  $2^{\alpha_g}[A]=\{\alpha_g(F): F\in A\}$, then 
			$\mu_{\alpha}(2^{2^{\alpha_g}}(A))=\mu_{\alpha}(2^{\alpha_g}[A])=\cup 2^{\alpha_g}[A]=\alpha_g(\cup A)=2^{\alpha_g}(\cup A)=2^{\alpha_g}(\mu_{\alpha}(A))$,
as desired.
\end{enumerate}
Now we prove that  $\mu=\{\mu_{\alpha}\}_{\{\alpha\in G\curvearrowright \mathbf{CH}\}}: (2^-)^2\Longrightarrow 2^-$ is a  natural transformation. For this take $\beta=(Y_g , \beta_g)_{g\in G}$ in   $G\curvearrowright \mathbf{CH}$ and a morphism $f:\alpha\to \beta $ in $G\curvearrowright \mathbf{CH}$. Consider the diagram
		\begin{equation}\label{d1}\xymatrix{2^{2^X} \ar[d]_-{2^{2^f}} \ar[r]^-{\mu_{\alpha}}& 2^X\ar[d]^-{2^{f}}\\
			2^{2^Y}\ar[r]_-{\mu_{\beta}}& 2^Y
		}\end{equation}
		
Let $A\in 2^{2^X}$, then  $2^f[A]=\{f(B): B\in A\}$ and  
		$2^f(\mu_{\alpha}(A))=2^f(\cup A)=f(\cup A)=\cup 2^f[A]=\mu_{\beta}(2^f[A])$
thus the diagram \eqref{d1}  is commutative.
\begin{teo} \label{monad}Let $\eta$ and $\mu$ be as above. Then the triple
	 $\mathbb{I}=(2^-,\eta,\mu)$ forms a monad in the category $G\curvearrowright \mathbf{CH}$.
\end{teo}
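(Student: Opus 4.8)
The plan is to verify directly the two coherence conditions gathered in \eqref{mon}, since every other ingredient is already available: Theorem \ref{teo2.8} guarantees that $2^-$ is an endofunctor of $G\curvearrowright\mathbf{CH}$, while the paragraphs preceding the statement show that each $\eta_\alpha$ and each $\mu_\alpha$ is a $G$-map and that $\eta$ and $\mu$ assemble into natural transformations $Id_{G\curvearrowright\mathbf{CH}}\Rightarrow 2^-$ and $(2^-)^2\Rightarrow 2^-$. Consequently it only remains to check that the components satisfy the unit and associativity laws of a monad, and since all the structure maps are just union operations on the underlying sets, these reduce to elementary set-theoretic identities that may be carried out while ignoring the topology and the $G$-action entirely.

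First I would treat the unit laws, which at an object $\alpha=(\alpha_g,X_g)_{g\in G}$ with underlying space $X$ read $\mu_\alpha\circ 2^{\eta_\alpha}=\mu_\alpha\circ\eta_{2^\alpha}=\mathrm{id}_{2^X}$. Here $2^{\eta_\alpha}\colon 2^X\to 2^{2^X}$ sends $A$ to $\{\{x\}:x\in A\}$ and $\eta_{2^\alpha}\colon 2^X\to 2^{2^X}$ sends $A$ to $\{A\}$, while $\mu_\alpha=\cup$. Hence, for $A\in 2^X$,
\[
(\mu_\alpha\circ 2^{\eta_\alpha})(A)=\bigcup_{x\in A}\{x\}=A=\bigcup\{A\}=(\mu_\alpha\circ\eta_{2^\alpha})(A),
\]
so both composites are the identity of $2^X$, as required.

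Next I would handle the associativity law, which at $\alpha$ reads $\mu_\alpha\circ\mu_{2^\alpha}=\mu_\alpha\circ 2^{\mu_\alpha}$. For $\mathcal A\in 2^{2^{2^X}}$ the map $\mu_{2^\alpha}$ sends $\mathcal A$ to $\cup\mathcal A$, whereas $2^{\mu_\alpha}$ sends $\mathcal A$ to $\{\cup\mathcal B:\mathcal B\in\mathcal A\}$; applying $\mu_\alpha$ once more yields
\[
(\mu_\alpha\circ\mu_{2^\alpha})(\mathcal A)=\bigcup\Big(\bigcup\mathcal A\Big)=\bigcup_{\mathcal B\in\mathcal A}\ \bigcup_{C\in\mathcal B}C=\bigcup\big\{\bigcup\mathcal B:\mathcal B\in\mathcal A\big\}=(\mu_\alpha\circ 2^{\mu_\alpha})(\mathcal A),
\]
the middle equality being exactly the associativity of arbitrary unions. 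This proves \eqref{mon} and hence the monad structure.

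I do not expect a genuine obstacle; the only point demanding care is the bookkeeping, namely unwinding the whiskered transformations into the components $2^{\eta_\alpha}$, $\eta_{2^\alpha}$, $\mu_{2^\alpha}$ and $2^{\mu_\alpha}$, and recording that the well-definedness of all these maps as morphisms of $G\curvearrowright\mathbf{CH}$\,---\,in particular that the unions involved remain compact and Vietoris-continuous\,---\,is precisely what Lemma \ref{proph} together with the verifications preceding the theorem already provide. Once this is observed, the argument is purely formal at the level of underlying sets.
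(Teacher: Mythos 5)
Your proposal is correct and follows essentially the same route as the paper's proof: both reduce the monad axioms to the componentwise identities $\mu_\alpha\circ\eta_{2^\alpha}=\mu_\alpha\circ 2^{\eta_\alpha}=\mathrm{id}_{2^X}$ and $\mu_\alpha\circ\mu_{2^\alpha}=\mu_\alpha\circ 2^{\mu_\alpha}$ via the whiskering identities, with everything else (functoriality of $2^-$, the $G$-map and naturality verifications, and continuity via Lemma \ref{proph}) already supplied before the theorem. Your write-up is in fact slightly more explicit than the paper's, which asserts the set-theoretic equalities (for instance the generalized associativity of unions) without unwinding them.
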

\begin{proof} It remains to prove that equalities in \eqref{mon} hold. Let 
	Sea $\alpha$ be an object in  $G\curvearrowright \mathbf{CH}$ 
Since$(\eta2^-)_{\alpha}=\eta_{2^{\alpha}}$, we have that  $\mu_{\alpha}\circ \eta_{2^{\alpha}}: 2^X\ni A\mapsto A\in 2^X,$ which gives$ \mu\circ \eta2^-=1_{2^-}$. Also, $(2^-\eta)_{\alpha}=2^-(\eta_{\alpha})=2^{\eta_{\alpha}}$, and  $\mu_{\alpha}\circ 2^{\eta_{\alpha}}: 2^X\ni A\mapsto A\in 2^X,$ which shows $\mu\circ 2^-\eta-=\mu\circ \eta2^-=1_{2^-}$. Finally, since  $(\mu2^-)_{\alpha}=\mu_{2^{\alpha}}$ y $(2^-\mu)_{\alpha}=2^-(\mu_{\alpha})$, we have \begin{center}
		$(\mu\circ \mu2^-)_{\alpha}=\mu_{\alpha}\circ \mu_{2^{\alpha}}=\mu_{\alpha}\circ2^{\mu_{\alpha}}=(\mu\circ2^-\mu)_{\alpha}$,
	\end{center}
	lthus $\mu\circ \mu2^-=\mu\circ 2^-\mu$  and  $(2^-,\eta,\mu)$ is a monad.
\end{proof}


\begin{thebibliography}{99}


\bibitem{AB}  F. Abadie,  Enveloping actions and Takai duality for partial actions. {\it Journal of Funct. Anal.} (2003) 197: 14-67.





\bibitem{Choi} K. Choi, Birget-Rhodes expansions of topological groups, {\it Advanced Studies in Contemporary Mathematics.} \textbf{23} (2013), (1), 203 - 211.

\bibitem{D} M. Dokuchaev, Recent developments around partial actions, {\it São Paulo J. Math. Sci.} (2019) \textbf{13} (1) 195-247.

\bibitem{DOE} M.\ Dokuchaev, R.\ Exel, Partial actions and subshifts,  {\it J.~Funct.~Analysis}, {\bf 272}  (2017), 5038--5106. 

\bibitem{Ruy1} R. Exel, Circle actions on $C^*$-algebras, partial automorphisms and generalized Pimsner-Voiculescu exact sequences, {\it J. Funct. Anal.} \textbf{122} (1994), (3), 361 - 401. 

\bibitem{R} R. Exel, Partial actions of group and actions of inverse semigroups, {\it Proc. Am. Math.  Soc.} \textbf{126} (12) (1998)  3481–3494.

\bibitem{Ruy} R. Exel, Partial dynamical systems, Fell bundles and applications, Mathematical surveys and monographs; volume 224,  Providence, Rhode Island: American Mathematical Society, 2017.


\bibitem{EGG} R. Exel, T. Giordano, and D. Gon\c calves, Enveloping algebras of partial actions as groupoid $C^*$-algebras, {\it J. Operator
Theory} \textbf{65} (2011)  197–210.

\bibitem{KL} J. Kellendonk and M. V. Lawson, Partial Actions of Groups, \emph{International Journal of Algebra and Computation} \textbf{14} (2004) 87-114.
\bibitem{MPV}  L. Mart\'inez, H. Pinedo and A. Villamizar, Partial actions on profinite spaces {\it preprint}.

\bibitem{NW} S. Nadler and A. Wanes Hyperspaces:
Fundamentals and Recent Advances, Pure and Applied
Mathematics, Marcel Dekker, Inc., New York, 1999.

\bibitem{PU}  H. Pinedo and C. Uzc\'ategui, Polish globalization of Polish group partial actions, \emph{Math. Log. Quart.} \textbf{63}  6 (2017) 481–490.
\bibitem{PU2} H. Pinedo and C. Uzc\'ategui,  Borel globalization of partial actions of  Polish groups,  \emph{Arch. Mat. Log. } \textbf{57}   (2018) 617–627.
\bibitem{QR} J.C. Quigg and I. Raeburn, Characterizations of crossed products by partial actions. {\it J.Operator Theory} (1997) 37:  311-340.		
\bibitem{St} B. Steinberg, Partial actions of groups on cell complexes,  {\it Monatsh. Math. } \textbf{138} (2), 159-170 (2003).
%
			
			
			
			

			
			
		\end{thebibliography}
	\end{document}